\documentclass[11pt,a4paper,reqno]{amsart}
\usepackage[utf8]{inputenc}
\usepackage[T1]{fontenc}
\usepackage[english]{babel}
\usepackage[top=3.7cm,bottom=3.3cm,left=2.5cm,right=2.5cm,heightrounded,bindingoffset=0mm]{geometry}
\usepackage{amsmath,amssymb,amsthm,amsfonts}
\usepackage{xcolor}
\usepackage{esint}
\usepackage{tikz}
\usepackage{hyphenat}
\usepackage{enumitem}
\usepackage{thmtools}

\theoremstyle{plain}
\newtheorem{theorem}{Theorem}[section]
\newtheorem{lemma}[theorem]{Lemma}
\newtheorem{proposition}[theorem]{Proposition}
\newtheorem{question}{Question}
\newtheorem{conj}{Conjecture}

\theoremstyle{remark}
\newtheorem{remark}[theorem]{Remark}

\newcommand{\dd}{\mathop{}\!\mathrm{d}}

\usepackage{microtype}
\usepackage{graphicx}
\usepackage{csquotes}

\author{Pierre Germain}
\address{
Fakult\"at f\"ur Mathematik\\
Oskar-Morgenstern-Platz 1\\ 
1090 Wien, Austria
}
\email[P. Germain]{pierre.germain@univie.ac.at}

\author{Micka\"el Latocca}
\address{
Laboratoire de Mathématiques et de Modélisation d'\'Evry (LaMME)\\
Université d'\'Evry Paris-Saclay\\
23 Bd François Mitterrand, 91000 Évry-Courcouronnes, France
}
\email[M. Latocca]{mickael.latocca@univ-evry.fr}

\title[Periodic Liouville Strichartz estimates]{Strichartz estimates for the Liouville equation on Euclidean tori and applications to Kakeya}

\keywords{Liouville equation; kinetic transport equation; velocity averages; Strichartz estimates; X-ray transform; flat tori; weighted estimates; Lorentz spaces; Wigner transform; Kakeya maximal function}

\subjclass[2020]{Primary 42B37; Secondary 42B35, 44A12, 81S30}

\makeatletter
\renewcommand{\@@and}{\&}
\makeatother

\usepackage[colorlinks=true,hidelinks]{hyperref}
\usepackage[nameinlink]{cleveref}
\crefname{conj}{Conjecture}{Conjectures}
\crefname{proposition}{Proposition}{Propositions}
\crefname{lemma}{Lemma}{Lemmas}
\crefname{question}{Question}{Questions}

\date{\today}
\thanks{}

\begin{document}

\begin{abstract}
We prove Strichartz estimates for the space-time density $\rho$ of solutions to the free Liouville equation on flat tori. In dimension one, we obtain the optimal range of estimates for the density $\rho \in L^p_{t,x}$ in terms of $f_0 \in L^{a}_vL^{b}_x$. In higher dimensions, we prove that such estimates cannot hold and that a weight has to be added: $\rho$ can be bounded in terms of the norm of $|v|^\gamma f_0$. We conjecture a range of optimal estimates, and partially prove them. Finally, these results have natural applications to the $X$-ray transform and Kakeya problems on Euclidean cylinders.
\end{abstract}

\hrule
\vspace{0.5cm}
\maketitle
\hrule 
\vspace{0.4cm}

\section{Introduction}

\subsection{The Liouville equation on the torus and its asymptotic behavior}

This article is dedicated to estimates for the free Liouville equation (or kinetic transport equation)
\begin{equation}\label{eq:Liouville}
    \partial_t f(t,x,v) + v\cdot\nabla_x f(t,x,v) = 0,\qquad f(0,x,v)=f_0(x,v) \geq 0, 
\end{equation}
set on the torus: $(x,v) \in \mathbb{T}^d \times \mathbb{R}^d$ with
\[
    \mathbb{T}^d = \mathbb{R}^d / (2\pi \mathbb{Z}^d).
\]
The solution is given by the explicit formula
\[
    f(t,x,v)=f_0(x-tv,v), 
\]
with corresponding density 
\[
    \rho(t,x)=\int_{\mathbb{R}^d} f_0(x-tv,v)\dd v.
\]

Space-independent solutions $f(t,x,v) = g(v)$ are stationary, and through mixing they are expected to play the role of attractors in appropriate topologies. At the level of the density, this means that 
\[
    \rho(t,x) \overset{t\to \infty}{\longrightarrow} \frac{1}{(2\pi)^d}\int_{\mathbb{R}^d} \int_{\mathbb{T}^d} f_0(x,v) \dd x \dd v.
\]

In this paper, we aim at quantifying this phenomenon. Upon subtracting from $f_0$ its space average, we consider from now on densities $f_0$ that have zero average in $x$ (and in particular we will jettison the non-negativity hypothesis), and we ask the following question.

\begin{question}\label{question1}
For which indices $p,a,b$ does there hold
\begin{equation}
\label{withoutweight}
    \| \rho(t,x) \|_{{L^p_{t,x}}(\mathbb{R} \times \mathbb{T}^d)} \lesssim \|  f_0(x,v) \|_{L^a_v(\mathbb{R}^d,L^b_x(\mathbb{T}^d))} \qquad \mbox{if} \;\;\; \int f_0(x,v) \dd x = 0 \quad \mbox{for all $v \in \mathbb{R}^d$}
\end{equation}
or more generally for which $p,\gamma,a,b$
\begin{equation}
\label{withweight}
    \| \rho(t,x) \|_{{L^p_{t,x}}(\mathbb{R} \times \mathbb{T}^d)} \lesssim \|  |v|^\gamma f_0(x,v) \|_{L^a_v(\mathbb{R}^d,L^b_x(\mathbb{T}^d))} \qquad \mbox{if} \;\;\; \int f_0(x,v) \dd x = 0 \quad \mbox{for all $v \in \mathbb{R}^d$} \;\; ?
\end{equation}
\end{question}

Why such a choice of norms? On the left-hand side, this is the simplest choice. On the right-hand side, this set of norms is natural since it is invariant by the flow of the Liouville equation (in a way which is reminiscent of the dispersive Strichartz estimates, to which we will come back). Furthermore, the weight $\gamma>0$ is needed in dimension $\geq 2$ to obtain non-trivial estimates. Finally, a more general set of invariant norms is $\||v|^{\gamma}|D_x|^{\delta} \cdot \|_{L^a_vL^b_x}$, but we set $\delta=0$ for simplicity.

\begin{figure}[ht]
\centering
\begin{tikzpicture}[x=8cm,y=4cm]
\coordinate (Pinfinf) at (0,0);
\coordinate (Pinf1)   at (0,1);
\coordinate (O)       at ({1/2},{1/2});
\coordinate (P32)     at ({1/3},{1/2});
\fill[gray!15] (Pinfinf) -- (Pinf1) -- (O) -- cycle;
\fill[gray!65] (Pinf1) -- (P32) -- (O) -- cycle;
\draw[->] (0,0) -- (0.60,0) node[right] {$\frac{1}{p}$};
\draw[->] (0,0) -- (0,1.10) node[above] {$\frac{1}{a}$};
\def\tick{0.015}
\draw ({1/3},-\tick) -- ({1/3},\tick);
\draw ({1/2},-\tick) -- ({1/2},\tick);
\draw (-\tick,{1/2}) -- (\tick,{1/2});
\draw (-\tick,1) -- (\tick,1);
\node[below] at ({1/3},0) {$\frac{d}{2(d+1)}$};
\node[below] at ({1/2},0) {$\frac{1}{2}$};
\node[left] at (0,{1/2}) {$\frac{1}{2}$};
\node[left] at (0,1) {$1$};\draw[thin] (Pinfinf) -- (Pinf1);
\draw[thin] (Pinfinf) -- (O);
\draw[dotted,thick] (Pinf1) -- (O);
\draw[solid,thick] (Pinf1) -- (P32) -- (O);
\fill (Pinf1) circle (0.008);
\node[below left] at (Pinfinf){$(\infty,\infty)$};
\node[above right] at (Pinf1){$(\infty,1)$};
\node[below right] at (O) {$(2,2)$};
\node[below left] at (P32) {$\left(\frac{2(d+1)}{d},2\right)$};
\end{tikzpicture}
\caption{We consider the inequality \eqref{eq.homogenous-d} with $\gamma= \frac{d}{a'}-\frac{1}{p}$ and $b = \frac{dp}{d+1}$, and ask for which values of $(a,p)$ it holds true.
The conjectured region is in gray, and the region where we could prove the inequality is dark gray.}
\label{figue}
\end{figure}

\subsection{Main results} In dimension $1$, it is possible to omit the weight $\gamma$ and obtain the following result.

\begin{theorem}[Euclidean tori: dimension $d=1$]\label{thm.torus.1d}
For all $f_0=f_0(x,v)$ such that $\int_{\mathbb{T}} f_0(x,v)\dd x=0$, there holds 
\begin{equation}\label{conjmean0}
    \| \rho \|_{L^p_{t,x}(\mathbb{R}\times \mathbb{T})} \lesssim \| f_0 \|_{L^{a}_v L^{b}_x}, \qquad a = p', \qquad b\geq \frac{p}{2}, \qquad p>2.
\end{equation}
Moreover, these are the only values of $(p,a,b) \in [1,\infty]^3$ for which such an estimate holds.
\end{theorem}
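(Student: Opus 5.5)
The sufficiency part rests on the Fourier series in $x$. Write $f_0(x,v)=\sum_{k\neq 0}\hat f_k(v)e^{ikx}$; there is no $k=0$ mode because of the mean-zero assumption. Then
\[
\rho(t,x)=\sum_{k\neq0}e^{ikx}\int_{\mathbb{R}}\hat f_k(v)e^{-iktv}\dd v=\sum_{k\neq 0}e^{ikx}\,\widehat{\hat f_k}(kt).
\]
The plan is to obtain \eqref{conjmean0} by interpolating between two endpoint estimates, and then to extend to all $b\ge p/2$ using $L^b(\mathbb{T})\subset L^{p/2}(\mathbb{T})$.

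\emph{Endpoint at $p=\infty$.} This is the trivial bound $|\rho|\le\|f_0\|_{L^1_vL^\infty_x}$, corresponding to $(p,a,b)=(\infty,1,\infty)$.

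\emph{Endpoint at $p=2$.} Plancherel in $(t,x)$ gives
\[
\|\rho\|_{L^2_{t,x}}^2=\sum_{k\neq0}\frac{2\pi}{|k|}\|\hat f_k\|_{L^2_v}^2 .
\]
This already yields $(2,2,2)$, and it gains the factor $|k|^{-1}$. However, interpolating $(2,2,2)$ with $(\infty,1,\infty)$ only gives $b=p$, so a sharper endpoint is needed.

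The aim is a Lorentz-space, restricted-weak-type substitute at the endpoint $(p,a,b)=(2,2,1)$, for instance
\[
\|\rho\|_{L^{2,\infty}_{t,x}}\lesssim\|f_0\|_{L^{2,1}_vL^1_x}.
\]
It suffices to test it on $f_0=\mathbf 1_E(v)h(x)$ with $\int h=0$ (and then on finite sums of such functions). For this I would split time dyadically. For $|t|\lesssim |E|^{-1}$ one has the physical-space bound
\[
|\rho(t,x)|\le\int_E|h(x-tv)|\dd v\lesssim \|h\|_{L^1}\big(|E|+|t|^{-1}\big),
\]
which holds because the map $v\mapsto x-tv \bmod 2\pi$ covers each point about $|t||E|+1$ times. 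For $|t|\gtrsim|E|^{-1}$, I would use the Fourier side: there $\rho$ has coefficients $\hat h_k\widehat{\mathbf 1_E}(kt)$, and the mean-zero condition $k\neq0$ together with the $|k|^{-1}$ gain gives decay. Combining the two regimes in the level-set estimate, and then applying real (mixed-norm Lorentz) interpolation with the $L^\infty$ endpoint, should produce the full open range $p>2$, $a=p'$, $b=p/2$. \textbf{This endpoint is the main obstacle}: the $L^1_x$ norm gives no Fourier control, so the mean-zero cancellation has to be exploited in physical space. If this fails, my fallback is to prove the dual X-ray-transform estimate
\[
\Big\|\int g(t,y+tv)\dd t\Big\|_{L^p_vL^{(p/2)'}_y}\lesssim\|g\|_{L^{p'}_{t,x}}
\]
for mean-zero $g$ by a $TT^*$ argument, with $p=4$ (where $b=2$) as a test case.

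\emph{Necessity.}
\begin{itemize}[leftmargin=*]
\item \emph{Scaling gives $a=p'$.} Take $f_0(x,\lambda v)$. This changes $\rho$ into $\lambda^{-1}\rho(t/\lambda,x)$, so the left-hand side scales like $\lambda^{-1+1/p}$ while the right-hand side scales like $\lambda^{-1/a}$. Letting $\lambda\to0$ and $\lambda\to\infty$ forces $1/a=1-1/p$.
\item \emph{Focusing example gives $b\ge p/2$.} Take $f_0$ concentrated in $x$ at scale $\delta$ (minus its mean), with velocities in a set tuned to $\delta$. The density then piles up on a $\delta$-neighbourhood of the light cone near $t=0$. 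Computing both sides as powers of $\delta$ should give $b\ge p/2$.
\item \emph{Failure at $p\le2$.} For fixed $f_0=h(x)g(v)$, the modes $\widehat g(kt)$ make $\rho(\cdot,x)$ decay only like $|t|^{-1}$ on average for generic $g$. Choosing $g$ with $\widehat g$ decaying like $|\xi|^{-1/2}$, or using a logarithmic counterexample, should exclude $p=2$.
\end{itemize}
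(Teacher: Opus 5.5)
The sufficiency half rests on an endpoint that is false. Dually, $\|\rho\|_{L^{2,\infty}_{t,x}}\lesssim\|f_0\|_{L^{2,1}_vL^1_x}$ is a restricted weak-type $L^2$ bound for a planar Kakeya-type maximal operator (the sup over lines of direction $(1,v)$), and Besicovitch sets rule it out.

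\emph{Counterexample.} Let $F\subset\mathbb{R}\times\mathbb{T}$ be the $\delta$-neighbourhood of (the projection of) a compact null Besicovitch set containing, for each $v\in[0,1]$, a segment $\{(t,x(v)+tv):0\le t\le 1\}$. Then $|F|\to0$ as $\delta\to0$. Set $f_0(x,v)=\mathbf 1_{[0,1]}(v)\big(\eta^{-1}\mathbf 1_{[0,\eta]}(x-x(v))-\frac{1}{2\pi}\big)$ with $\eta<\delta$. It has zero mean in $x$, and $\|f_0\|_{L^{2,r}_vL^1_x}\lesssim 1$ for every $r$. Yet
\[
\langle\rho,\mathbf 1_F\rangle=\int_0^1\int_{\mathbb{T}}f_0(y,v)\int_{\mathbb{R}}\mathbf 1_F(t,y+tv)\dd t\dd y\dd v\ \geq\ 1-\frac{|F|}{2\pi},
\]
whereas your bound would give $|\langle\rho,\mathbf 1_F\rangle|\lesssim\|\rho\|_{L^{2,\infty}}|F|^{1/2}\lesssim|F|^{1/2}\to0$. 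So no bound $L^{2,r}_vL^1_x\to L^{2,q}_{t,x}$ can hold.

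\emph{Where your argument breaks.} The failing step is the reduction to tensors $\mathbf 1_E(v)h(x)$. For a mixed norm you must allow $h$ to depend on $v$, and a weak-type bound is not preserved under finite sums of tensors. Even for tensors, your pointwise bound $\int_E|h(x-tv)|\dd v\lesssim\|h\|_{L^1}(|E|+|t|^{-1})$ holds when $E$ is an interval but not in general. The multiplicity of $v\mapsto x-tv \bmod 2\pi$ on $E$ is controlled by $1+|t|\operatorname{diam}E$, not by $1+|t||E|$; a comb of tiny intervals spaced by $2\pi/t$ breaks it.

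\emph{What the paper does instead.} The paper never places $L^1_x$ at $p=2$.
\begin{itemize}
\item It proves the interior estimate $L^{4/3}_vL^2_x\to L^4_{t,x}$. It expands $\|\rho\|_{L^4}^4$ over $k_1+k_2=k_3+k_4$ and uses H\"older in $t$, which produces $|k_1k_2k_3k_4|^{-1/4}$. It then applies O'Neil's H\"older and Young inequalities to $b_k=|k|^{-1/4}\|\widehat{a_k}\|_{L^4}$, followed by Hausdorff--Young and Minkowski.
\item For $p\ge4$ it interpolates this with $L^1_vL^\infty_x\to L^\infty$.
\item For $2<p<4$ it interpolates with your Plancherel identity, but \emph{kept} as $L^2_v\dot H^{-1/2}_x\to L^2$ rather than degraded to $L^2_vL^2_x$. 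This lands in $L^{p'}_v\dot H^{1/2-2/p}_x\supset L^{p'}_vL^{p/2}_x$ by Sobolev on $\mathbb{T}$.
\end{itemize}
Your $TT^*$ fallback at $p=4$ would at best supply the $L^4$ node. You would still need this second interpolation for $2<p<4$.

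On necessity, two of your three arguments are sound. Velocity-only scaling ($f_0(x,\lambda v)$ gives $\lambda^{-1}\rho(t/\lambda,x)$) is legitimate on $\mathbb{T}$ and simpler than the paper's joint scaling. The focusing example needs no tuning of velocities: for $h_\delta=\delta^{-1}\chi(\cdot/\delta)-\frac{1}{2\pi}$ and a bump $\psi\ge0$ supported in $[1,2]$, one has $|\rho(t,x)|\gtrsim t^{-1}$ on a set of measure $\sim t$ for $\delta\ll t\ll1$. This gives $\delta^{2/p-1}\lesssim\delta^{1/b-1}$, i.e.\ $b\ge p/2$.

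Your last bullet does not work as written.
\begin{itemize}
\item For $p<2$, use the single mode $e^{ix}\varphi(v)$. This reduces the estimate to $\|\widehat\varphi\|_{L^p}\lesssim\|\varphi\|_{L^{p'}}$, which is false for $p<2$.
\item At $p=2$, your own Plancherel identity shows that $(2,2,2)$ holds. By Sobolev on $\mathbb{T}$, so does $(2,2,b)$ for every $b>1$. Single modes satisfy the $L^2$ identity exactly, so no choice of $\widehat g\sim|\xi|^{-1/2}$ can help.
\item The only failure at $p=2$ is the endpoint $b=1$, i.e.\ $L^1(\mathbb{T})\not\hookrightarrow\dot H^{-1/2}$. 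The paper exhibits this with $h_N=N\chi(N\cdot)-\frac{1}{2\pi}$, using $\sum_{1\le|k|\le\sqrt N}|k|^{-1}\sim\log N$.
\end{itemize}
This is also all the paper's argument excludes at $p=2$. So the statement's exclusion of $p=2$ can only be meant for the endpoint $b=p/2=1$.
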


In dimension larger or equal to $2$, the weight $\gamma$ is needed, and our result is as follows.

\begin{theorem}[Euclidean tori: dimension $d\geq 2$]\label{thm.noest-dim2}
We consider the inequality
\begin{equation}
    \label{eq.homogenous-d}
    \|\rho\|_{L^p(\mathbb{R}_t \times \mathbb{T}^d)} \lesssim \| |v|^{\gamma} f_0\|_{L^a_vL^b_x}, \quad \int_{\mathbb{T}^d}f_0(x,v)\mathrm{d}x = 0, 
\end{equation}
where $(p,a,b,\gamma) \in [1,\infty]^3 \times \mathbb{R}$. 
\begin{itemize}
\item[(i)] Necessary condition: the inequality \eqref{eq.homogenous-d} can only hold if
\begin{equation}
\label{necessary}
    \gamma = \frac{d}{a'}-\frac{1}{p}, \qquad p\geq 2, \qquad  p'<a\leq p, \qquad b\geq \frac{dp}{d+1}.
\end{equation}
\item[(ii)] Sufficient condition including the endpoint in $b$: the inequality \eqref{eq.homogenous-d} is satisfied if \eqref{necessary} holds and furthermore
\[ 
    2<p<\infty, \qquad p'<a < \min\left\{2,\frac{dp}{dp-(d+1)}\right\}.
\]
\item[(iii)] Sufficient condition excluding the endpoint in $b$: the inequality \eqref{eq.homogenous-d} is satisfied if \eqref{necessary} holds and
\[
    p'<a < p, \qquad b > \frac{dp}{d+1}.
\]
\end{itemize}
\end{theorem}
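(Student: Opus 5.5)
The proof runs on the Fourier side in $x$. Expand $f_0(x,v)=\sum_{k\neq 0}\hat f_0(k,v)e^{ik\cdot x}$; the zero-average condition removes $k=0$. Then
\[
\rho(t,x)=\sum_{k\neq0}e^{ik\cdot x}\int_{\mathbb{R}^d}\hat f_0(k,v)e^{-itk\cdot v}\dd v .
\]
So each Fourier mode of $\rho$ is a one-dimensional Fourier transform, in the variable $k\cdot v$, of $\hat f_0(k,\cdot)$ integrated over hyperplanes orthogonal to $k$. Equivalently, $\rho(t,\cdot)$ is a superposition over $v$ of translates of $f_0(\cdot,v)$ by $tv$. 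The necessary and sufficient parts are handled separately.

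\textbf{Necessary conditions (i).} Each condition in \eqref{necessary} comes from a test function.
\begin{itemize}
\item The relation $\gamma=\frac{d}{a'}-\frac1p$ follows from the scaling $f_0(x,v)\mapsto f_0(x,\lambda v)$ combined with $t\mapsto \lambda t$. This scaling preserves $\mathbb{T}^d$, multiplies $\rho$ by $\lambda^{-d}$ after the time rescaling, and changes the right-hand side by $\lambda^{\gamma-d/a}$. Equating exponents forces the stated $\gamma$.
\item For $b\ge \frac{dp}{d+1}$, take $f_0=\chi(x/\delta)\phi(v)$ minus its mean, with $\phi$ a bump near a fixed $v_0\neq0$. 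For $|t|\lesssim\delta$ the density concentrates on a $\delta$-ball, and for later times on Kakeya-type tubes. Comparing $\|\rho\|_{L^p}$ on $|t|\lesssim 1$, where the translates spread over a set of size about $\delta^{d}\cdot t^{-d}$, with $\delta^{d/b}$ gives the constraint.
\item For $p\ge2$ and $a\le p$, use a single Fourier mode $f_0=e^{ik\cdot x}g(v)$. Then $\rho=e^{ik\cdot x}\widehat{G}(tk)$, where $G$ is the pushforward of $g$ onto the line $\mathbb{R}k$. The estimate then reduces to a one-dimensional bound $\|\widehat G\|_{L^p}\lesssim\||v|^\gamma g\|_{L^a}$.
\item For the lower bound $a>p'$, let $g$ concentrate on a thin slab. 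Hausdorff--Young reversals, i.e.\ Knapp-type examples, yield $p\ge2$, and taking $g$ spread along the hyperplane yields $a>p'$; the endpoint $a=p'$ fails logarithmically.
\item Finally, summing many modes $k$ with random signs shows that no estimate without weight can hold, which explains why the weight appears.
\end{itemize}

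\textbf{Sufficient condition (iii).} Decompose dyadically in $|v|\sim 2^j$ and, by scaling, reduce to $|v|\sim1$. There I will prove the two endpoint families below and then interpolate.
\begin{itemize}
\item An $L^1_v$-type bound. By Minkowski, $\|\rho\|_{L^\infty_t L^b_x}\le\|f_0\|_{L^1_vL^b_x}$, which in $L^p_{t}$ requires decay in $t$. That decay comes from the mean-zero condition via a TT$^*$ argument.
\item A $TT^*$ estimate. The kernel of $TT^*$ is $\int e^{ik\cdot(x-y)}\dots$, which leads to $\sum_{k\neq0}|\widehat{\psi}(k(t-s))|$ for $\psi$ a bump in $|v|\sim1$. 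This sum is bounded by $\sum_k\langle k(t-s)\rangle^{-N}\lesssim |t-s|^{-d}$ for $|t-s|\ge1$ and by the full lattice count for small $|t-s|$. Hardy--Littlewood--Sobolev in $t$ combined with Young in $x$ then gives $L^{a}_vL^b_x\to L^p_{t,x}$ bounds, first with $x$-integrability $b$ at the endpoint $p=2$, $a=2$.
\end{itemize}
Real interpolation between these, together with the dyadic sum in $j$, gives (iii). The strict inequality $b>\frac{dp}{d+1}$ is used to make the dyadic pieces summable geometrically, at the price of the endpoint.

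\textbf{Sufficient condition (ii), the main obstacle.} Here $b=\frac{dp}{d+1}$ is the sharp Kakeya/X-ray exponent and the dyadic losses cannot be absorbed. My first attempt is to compare with $\mathbb{R}^d$ locally in time: for $|t|\lesssim1$ the torus is invisible, and the Euclidean kinetic Strichartz estimates (Castella--Perthame type, equivalently X-ray transform $L^{b}\to L^{p}$ bounds valid in the range $a<2$) apply directly. For $|t|\gtrsim1$, I use the $|t|^{-d}$ decay of the $TT^*$ kernel above. The dyadic pieces are then combined through a Lorentz-space/Bourgain summation trick; this requires $a<\frac{dp}{dp-(d+1)}$ so that the two dyadic bounds have exponents of opposite sign. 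The delicate point is verifying this summation at the exact endpoint, and I expect it to occupy most of the work.
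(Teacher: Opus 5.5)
\textbf{The sufficiency part, (ii) and (iii), has genuine gaps: the concrete steps you propose would fail.}

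First, the ``$L^1_v$-type bound'' with time decay coming from the mean-zero condition cannot exist. Take $f_0=e^{ik\cdot x}\varepsilon^{-d}\phi\bigl((v-v_0)/\varepsilon\bigr)$. Then $\|f_0\|_{L^1_vL^b_x}\sim 1$, while $\rho=c\,e^{ik\cdot(x-tv_0)}\widehat{\phi}(\varepsilon t k)$, so $\|\rho\|_{L^p_{t,x}}\sim(\varepsilon|k|)^{-1/p}\to\infty$. The only $L^1_v$ estimate is the trivial one into $L^\infty_{t,x}$.

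Second, your $TT^*$ step is lossy. The $L^\infty_x$ bound $\sum_{k\neq0}|\widehat{\psi^2}(k\tau)|$ on the kernel is $\sim|\tau|^{-d}$ for small $\tau$; it decays rapidly for $|\tau|\ge1$. Its $L^{p/2}_x$ norm is therefore $\sim|\tau|^{-d(1-2/p)}$. HLS in $t$ combined with Young in $x$, as you propose, then requires $d(1-\frac2p)\le\frac2p$, i.e.\ $p\le\frac{d+2}{d}\le 2$, which gives nothing in the range $p>2$. Besides, $TT^*$ only handles $L^2$ input, not $L^a_vL^b_x$.

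Third, the dyadic decomposition in $|v|$ gains nothing. The exact scaling $v\mapsto\lambda v$, $t\mapsto t/\lambda$ (the one you used to find $\gamma$) gives every shell the same constant. It does not touch $x$, so the slack $b>\frac{dp}{d+1}$ cannot produce geometric decay, and you are left with an $\ell^1$ versus $\ell^a$ loss across shells. For (ii) you leave the decisive step open yourself.

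\textbf{The missing idea.} The single-mode reduction you wrote for the necessary conditions also drives the sufficient ones, globally in time and with no splitting in $v$. The paper proves $\|\widehat{f_k}(tk)\|_{L^p_t}\lesssim|k|^{-1/p}\||v|^\gamma f_k\|_{L^a_v}$ for $p'<a\le p$:
\begin{itemize}
\item By duality this is $\||w|^{-\gamma}\widehat h(w_1)\|_{L^{a'}(\mathbb{R}^d)}\lesssim\|h\|_{L^{p'}(\mathbb{R})}$.
\item Integrating out $w'$ converges exactly when $\gamma a'>d-1$, i.e.\ $a>p'$. This reduces the bound to $\||z|^{-\beta}\widehat h\|_{L^{a'}}\lesssim\|h\|_{L^{p'}}$ with $\beta=\frac1{a'}-\frac1p$.
\item That follows from H\"older--O'Neil and the Lorentz form $\|\widehat h\|_{L^{p,p'}}\lesssim\|h\|_{L^{p'}}$ of Hausdorff--Young, using $a\le p$.
\end{itemize}
With $a=2$ and $2<p\le\frac{2(d+1)}{d}$, the endpoint-$b$ bound follows from: Sobolev in $x$ at fixed $t$; Minkowski to move $L^p_t$ inside $\ell^2_k$ (since $p>2$); the single-mode bound; and the dual embedding $L^b\hookrightarrow\dot H^{\frac d2-\frac{d+1}{p}}$. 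Weighted complex interpolation with $L^1_vL^\infty_x\to L^\infty_{t,x}$ then gives (ii). The restrictions $a\le2$ and $a\le\frac{dp}{dp-(d+1)}$ are just $\theta=\frac2{a'}\le1$ and $\frac{2p}{a'}\le\frac{2(d+1)}{d}$, not a dyadic sign condition.

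The summation trick you reach for belongs to (iii), not (ii). The paper passes to Lorentz spaces in $v$ via H\"older--O'Neil. It then real-interpolates between exponents $p_0<p<p_1$ with the same $\gamma$ and the same $b$, which is possible precisely because $b>\frac{dp}{d+1}$. The result lands in $L^{p,a}\hookrightarrow L^p$, since $a\le p$.

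\textbf{On (i).} Your scaling, the short-time bump, and the thin-slab and spread-along-the-hyperplane examples match the paper's. For the bump, restrict to $|t|\ll\delta$, where $\rho\sim1$ on a $\delta$-ball, to get $\delta^{\frac{d+1}{p}}\lesssim\delta^{\frac db}$. Mind the signs in the scaling: the right-hand side scales as $\lambda^{-\gamma-\frac da}$ and $\|\rho\|_{L^p}$ as $\lambda^{-d+\frac1p}$.

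However, you give no example for $a\le p$, and it does not follow from any standard one-dimensional Hausdorff--Young fact. The paper uses the multi-scale datum $e^{ix_1}\mathbf{1}_{1<v_1<R}\mathbf{1}_{|v'|<v_1}v_1^{-\frac1{p'}-(d-1)}$. For it, $|\rho(t)|^p\gtrsim t^{-1}$ on $[M/R,\delta]$, while the right-hand side is $\lesssim(\log R)^{1/a}$. This forces $(\log R)^{1/p}\lesssim(\log R)^{1/a}$. The condition $p\ge2$ then follows from $p'<a\le p$.
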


\begin{remark}
Including the endpoint in $b$ in the above estimates is particularly relevant, since this corresponds to the scale-invariant case.
\end{remark}

Both theorems above imply Strichartz estimates for general data: if
\[
    f_0(x,v) = g_0(v) + h_0(x,v), \qquad \mbox{with} \;\;\; \int h_0(x,v) \dd x = 0,
\]
then
\begin{equation*}
    \|\rho \|_{L^p_{t,x}([0,1]\times \mathbb{T}^d)} \lesssim \|g_0\|_{L^1_v} + \||v|^\gamma h_0 \|_{L^{a}_v L^{b}_x},
\end{equation*}
provided that $p,\gamma,a,b$ satisfy the hypotheses of \Cref{thm.torus.1d} or \Cref{thm.noest-dim2}. Notice however that the time interval in the above inequality is now finite.

\subsection{Applications}

\subsubsection{The $X$-ray transform on the cylinder} The dual of the map $f_0(x,v) \mapsto \rho(t,x)$ is the modified $X$-ray transform
\[
    [\widetilde{X} f](x,v) = \int_{-\infty}^{+ \infty} f(t,x+tv) \dd t
\]
(it differs from the standard $X$-ray transform by a factor $\sqrt{1+|v|^2}$).

Given a function in $(x,v) \in \mathbb{T}^d \times \mathbb{R}^d$, we denote $\Pi_0$ and $\Pi_{\neq}$ for the orthogonal projections on functions which do not depend on $x$, and functions with zero mean in $x$ respectively:
\[
    [\Pi_0 f](x,v) = \frac 1 {(2\pi)^d} \int_{\mathbb{T}^d} f(x,v) \dd x, \qquad \Pi_{\neq} = \operatorname{Id} - \Pi_0.
\]

By duality, the boundedness of
\[
    \rho \Pi_{\neq} |v|^{-\gamma}:\, L_v^a (\mathbb{R}^d, L^b_x(\mathbb{T}^d)) \, \longrightarrow \, L^p_{t,x}(\mathbb{R} \times \mathbb{T}^{d}),
\]
which is addressed in \Cref{thm.torus.1d,thm.noest-dim2}, is equivalent to that of
\[
    |v|^{-\gamma} \Pi_{\neq} \widetilde{X} : \, L^{p'}_{t,x}(\mathbb{R} \times \mathbb{T}^{d}) \, \longrightarrow \, L_v^{a'} (\mathbb{R}^d, L^{b'}_x(\mathbb{T}^d)).
\]
\subsubsection{The Kakeya problem on the cylinder} 
Consider a family of tubes $\{ T_k \}$ of length $R$ and cross section of radius $\delta$, and $\frac{\delta}{R}$-separated directions---this might be on Euclidean space $\mathbb{R}^{d+1}$ or on the cylinder $\mathbb{R} \times \mathbb{T}^d$. 
In the whole space, the parameter $R$ is usually normalized to $1$, which is always possible by scaling. But the Kakeya problem on the cylinder is only relevant if $R \to \infty$, both cases being equivalent for bounded $R$.

The Kakeya maximal operator conjecture on Euclidean space states that, for any $\varepsilon>0$,
\begin{equation}
\label{KMO}
    \left\| \sum_k \mathbf{1}_{T_k} \right\|_{L^p(\mathbb{R}^{d+1})} \lesssim 
    \begin{cases}
        \left( \frac{R}{\delta} \right)^\varepsilon R^{\frac{d+1}p} & \mbox{if $p \leq \frac{d+1}{d}$} \\
        \left( \frac{R}{\delta} \right)^\varepsilon R^d \delta^{-d + \frac{d+1}{p}} & \mbox{if $p > \frac{d+1}{d}$} .
    \end{cases}
\end{equation}
We refer to \cite{Cordoba} for the classical proof of the case $d=2$, and to \cite{Mattila} for a textbook presentation.

We now turn to the case of the cylinder. For simplicity, we focus on the case of tubes with direction $(1,v) \in \mathbb{R}^{d+1}$ with $|v| \lesssim 1$; the general case would only involve an additional weight, which is natural since the combinatorial properties of tubes depend critically on the size of $v$.

\begin{theorem}[Maximal operator Kakeya on the cylinder] \label{thm.Kakeya}
On the cylinder $\mathbb{R}\times \mathbb{T}^d$, consider a family of tubes $\{T_k\}_{k\in \mathcal{K}}$ with length $R$, cross section of radius $\delta$, and directions $(1,v_k)$, where the $\{v_k\}$ are $\frac{\delta}{R}$-separated and of size $\sim 1$. Then for all $p>2$ there holds
\begin{equation}
\label{sauterelle}
    \left\| \sum_{k\in\mathcal{K}}\mathbf{1}_{T_k} \right\|_{L^p(\mathbb{R}\times \mathbb{T}^{d})} \lesssim R^{d+\frac{1}{p}} + R^d \delta^{\frac{d+1}{p}-d}.
\end{equation}
Furthermore, this inequality is optimal: it can be saturated.
\end{theorem}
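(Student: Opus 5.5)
The plan is to realise $\sum_k \mathbf{1}_{T_k}$ as being dominated pointwise by the density $\rho$ of a well-chosen initial datum $f_0$. We then split $f_0 = \Pi_0 f_0 + \Pi_{\neq} f_0$: the mean part gives the term $R^{d+\frac1p}$ by direct computation, and the mean-zero part gives $R^d\delta^{\frac{d+1}{p}-d}$ via \Cref{thm.noest-dim2}(ii) at the scale-invariant endpoint $b=\frac{dp}{d+1}$.

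\textbf{Reduction.} Since only $L^p$ norms matter, I will assume after a time translation that every tube lives over the time window $[0,R]$:
\[
T_k=\{(t,x): t\in[0,R],\ |x-x_k-tv_k|<\delta\}.
\]
If the tubes sit in different time windows, one first decomposes the time axis into windows of length $R$. This reduction is the step I am least sure is harmless in full generality; I would treat it with the convention that all tubes share one window.

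\textbf{Choice of datum.} Set
\[
f_0(x,v)=\Big(\frac R\delta\Big)^d\sum_k \mathbf{1}_{B(x_k,2\delta)}(x)\,\mathbf{1}_{B(v_k,\delta/R)}(v).
\]
For $(t,x)\in T_k$, $t\in[0,R]$ and $|v-v_k|<\delta/R$ we get $|x-tv-x_k|<2\delta$. Hence $\rho(t,x)\gtrsim \sum_k\mathbf{1}_{T_k}(t,x)$ on $[0,R]\times\mathbb{T}^d$, which gives
\[
\Big\|\sum_k\mathbf{1}_{T_k}\Big\|_{L^p}\lesssim \|\rho_0\|_{L^p([0,R]\times\mathbb{T}^d)}+\|\rho_{\neq}\|_{L^p(\mathbb{R}\times\mathbb{T}^d)},
\]
where $\rho_0,\rho_{\neq}$ are the densities of $\Pi_0 f_0$ and $\Pi_{\neq}f_0$.

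\textbf{Mean part.} The $\frac\delta R$-separation makes the $v$-balls essentially disjoint, so $\#\mathcal K\lesssim (R/\delta)^d$. For each $v$, $\Pi_0 f_0(v)\lesssim (R/\delta)^d\delta^d=R^d$ on a set of $v$ of total measure $\lesssim \#\mathcal K(\delta/R)^d\lesssim1$. Thus $\rho_0(t)=\int \Pi_0 f_0\,\dd v\lesssim R^d$, and integrating over $[0,R]\times\mathbb{T}^d$ gives $\|\rho_0\|_{L^p}\lesssim R^{d+\frac1p}$.

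\textbf{Mean-zero part.} Since $|v|\sim1$ on the support, the weight $|v|^\gamma$ is harmless, and $\Pi_{\neq}$ is bounded on $L^b_x$. Apply \Cref{thm.noest-dim2}(ii) with $b=\frac{dp}{d+1}$ and any $a$ with $p'<a<\min\{2,\frac{dp}{dp-(d+1)}\}$; this interval is nonempty for $p>2$. For fixed $v$, $\|f_0(\cdot,v)\|_{L^b_x}\lesssim (R/\delta)^d\delta^{d/b}$, and the $L^a_v$ norm only multiplies this by $(\#\mathcal K(\delta/R)^d)^{1/a}\lesssim1$. Since $\frac db=\frac{d+1}{p}$, we obtain $\|\rho_{\neq}\|_{L^p}\lesssim R^d\delta^{\frac{d+1}p-d}$, which completes \eqref{sauterelle}.

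\textbf{Optimality.} Both terms are attained by explicit families.
\begin{itemize}
\item For the second term, take a maximal family of $\sim(R/\delta)^d$ directions and let every tube pass through $(0,0)$. On the region $\{|t|\lesssim\delta,\ |x|\lesssim\delta\}$, of volume $\delta^{d+1}$, all tubes overlap. This gives a lower bound $(R/\delta)^d\delta^{\frac{d+1}{p}}$.
\item For the first term, take any maximal family. Then $\|\sum\mathbf{1}_{T_k}\|_{L^1}\sim(R/\delta)^dR\delta^d=R^{d+1}$, while the support lies in $[0,R]\times\mathbb{T}^d$, which has volume $\sim R$. Hölder's inequality then gives $\|\sum\mathbf{1}_{T_k}\|_{L^p}\gtrsim R^{d+1}R^{-1/p'}=R^{d+\frac1p}$.
\end{itemize}
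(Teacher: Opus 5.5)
Your argument is the paper's proof read on the primal side. The paper pairs $\sum_k \mathbf{1}_{T_k}$ with $g\in L^{p'}$ and replaces $\int_{T_k} g$ by the average of $\widetilde{X}g$ over $B(x_k,\delta)\times B(v_k,\frac{\delta}{R})$; up to the radius of the $x$-balls, this is exactly $\int \rho\, g$ for your datum $f_0$. It then splits $\widetilde{X}g=\Pi_0\widetilde{X}g+\Pi_{\neq}\widetilde{X}g$, which is your splitting of $f_0$ since $\Pi_0$ is self-adjoint. For a family whose tubes all sit over a common time interval of length $O(R)$, your computation is correct. For $d=1$ you must quote \Cref{thm.torus.1d} with $a=p'$, $b=\frac p2$, since \Cref{thm.noest-dim2} is stated only for $d\geq 2$. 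Your optimality examples are those of the paper.

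The gap is the reduction to that case, which you assert but do not prove. The theorem places no restriction on the time positions of the tubes, and ``only $L^p$ norms matter'' does not let you move tubes individually onto $[0,R]$. Independent time translations change the overlap pattern; for instance, they can destroy a bush whose tubes share a point but have different time intervals. So they give no upper bound for the original configuration. Without some reduction your datum only dominates tubes near $t=0$: a tube near time $t_k\gg R$ is smeared by $|t_k|\,|v-v_k|\gg\delta$ across the velocity ball.

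The missing step is short. Let $t_k$ be the time centre of $T_k$, and set $\mathcal{T}_j=\{k:\ t_k\in[jR,(j+1)R)\}$ and $G_j=\sum_{k\in\mathcal{T}_j}\mathbf{1}_{T_k}$. Then $G_j$ is supported in $I_j\times\mathbb{T}^d$ with $I_j=[jR-\frac R2,jR+\frac{3R}{2})$.
\begin{itemize}
\item Each $t$ lies in at most two of the $I_j$, so $\|\sum_j G_j\|_{L^p}^p\leq 2^{p-1}\sum_j\|G_j\|_{L^p}^p$.
\item With $\widetilde{G}_j=G_j(\cdot+jR,\cdot)$, superadditivity of $s\mapsto s^p$ on $[0,\infty)$ gives $\sum_j\|G_j\|_{L^p}^p\leq\|\sum_j\widetilde{G}_j\|_{L^p}^p$.
\item The translated family $\{T_k-(jR,0)\}_{k\in\mathcal{T}_j,\,j\in\mathbb{Z}}$ has the same directions, still $\frac{\delta}{R}$-separated, and lies in $[-\frac R2,\frac{3R}{2})\times\mathbb{T}^d$.
\item On this window your datum works verbatim, with $2\delta$ replaced by $3\delta$ and $x_k$ the position of the axis of $T_k$ at $t=0$.
\end{itemize}
The paper handles the same issue differently. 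It keeps the windows $\mathcal{T}_j$ inside the duality argument, bounds the mean part of window $j$ by $N_j\delta^d R^{1/p}$, and sums using $\sum_j N_j\lesssim(\frac R\delta)^d$.
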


The proof of this theorem will be given in \Cref{section_proof_Kakeya}; it relies on the boundedness of $\widetilde{X}$ which can be deduced by duality from \Cref{thm.torus.1d,thm.noest-dim2}. Without entering into details for now, it is interesting to give a geometric interpretation of the two terms on the right-hand side of \eqref{sauterelle}:
\begin{itemize}
    \item The term $R^d \delta^{\frac{d+1}{p}-d}$ corresponds to the  ``bush'' example, where all tubes intersect at one point; this example is the one which saturates the inequality \eqref{KMO} in the whole space for $p \geq \frac{d+1}{d-1}$;
    \item The term $R^{d+\frac{1}{p}}$ can be understood as an average interaction of tubes which fill up the torus---this effect is absent in the case of the whole space.
\end{itemize}

\subsection{Related directions} 

\subsubsection{Strichartz estimates for the Schr\"odinger equation} 

This is where it all started! After Strichartz' original paper \cite{Strichartz} regarding estimates in the whole space, Bourgain \cite{Bourgain} initiated the study of estimates on the torus of the type
\[
    \| e^{it\Delta} f_0 \|_{L^p_{t,x}([0,1] \times \mathbb{T}^d)} \lesssim \| f_0 \|_{H^s(\mathbb{T}^d)},
\]
(where $p \in [1,\infty], s\geq 0$) which was completed in \cite{BourgainDemeter}. One can also ask for estimates of the type
\[
    \| e^{it\Delta} f_0 \|_{L^p_{t,x}([0,1] \times \mathbb{T}^d)} \lesssim \| \widehat{f_0} \|_{L^q(\mathbb{T}^d)}
\]
(where $p,q \in [1,\infty]$) but finding the optimal range of $(p,q)$ is an open problem, equivalent to the Fourier restriction problem.

The estimates above on $e^{it\Delta} f_0$ and \eqref{withoutweight}, \eqref{withweight} have common features: the left-hand side of the equation is a space-time $L^p$ norm, while the right-hand side norm is invariant through the flow of the linear problem (Liouville of Schr\"odinger). Furthermore, the Schr\"odinger and Liouville equations on the torus are intimately tied, since they are exchanged by the Wigner transform---though it does not seem possible to use this correspondence to derive analytical estimates.

\subsubsection{Euclidean space} 

The analog of \Cref{question1} can be asked if the space variable ranges over $\mathbb{R}^d$; this is a classical subject which is the background of the present article. It will be surveyed in \Cref{section_Euclidean_space}, suffice it for now to say that the conjecture
\[
    \| \rho \|_{L^p_{t,x}(\mathbb{R} \times \mathbb{R}^d)} \lesssim \| f_0 \|_{L^a_v (\mathbb{R}^d, L^b_x(\mathbb{R}^d))} \quad \mbox{if} \quad  
a'=dp, \;\; b = \frac {dp}{d+1}, \;\; p > \frac{d+1}{d}
\]
is known if $d=1$ or $p \geq \frac{d+2}{d}$.

\subsubsection{General compact manifolds} 

The Liouville equation can be generalized in a natural way to compact (or geometrically bounded) Riemannian manifolds $M$, by replacing transport along straight lines by transport along geodesics. In this context, Salort \cite{Salort2007} proved the following inequality
\begin{align*}
    & \|f\|_{L_t^q([0,1],L^p_x(M,L^r_v(T^*_xM)))} \lesssim \|\langle v\rangle^{\frac{1}{q} + \varepsilon}f_0\|_{L^a_{x,v}(TM)} \\
    & \qquad \qquad \mbox{if} \qquad  \frac{2}{q} = d\left(\frac{1}{r} - \frac{1}{p}\right), \quad \frac{1}{a} = \frac{1}{2}\left(\frac{1}{r} + \frac{1}{p}\right), \quad q>2\geq a.
\end{align*}
Furthermore, the value of $q$ is optimal, as can be seen from considering stationary solutions of the form $g(|v|)$.

The strength of this estimate is its universality, but its weakness is that it is limited to a finite time interval. As we saw in the case of the torus, global in time estimates require additional assumptions and quantify relaxation to the equilibrium through mixing. Proving such estimates for general manifolds seems to be very difficult, but it is also very interesting!

\subsubsection{Velocity averaging lemmas} 

Velocity averaging lemmas play a crucial role in the theory of kinetic equations \cite{GLPS,GolseStRaymond,Perthame2002}. They are usually formulated in the inhomogeneous case, but as observed in \cite[Remark 3.2]{Perthame2004}, homogeneous versions follow. The basic instance is
\[
    \| \chi(t) \rho_\psi(t,x) \|_{H^{1/2}_{t,x}} \lesssim \| f_0 \|_{L^2_{x,v}}, \qquad \rho_\psi(t,x) = \int_{\mathbb{R}^d} \psi(v) f_0(x-tv,v) \dd v,
\]
where $f_0(x-tv,v)$ is the solution of the Liouville equation set in the whole space, and $\chi,\psi \in \mathcal{C}^\infty_c$. This estimate gives improved regularity for $\rho_\psi$ compared to $f_0$, which can be converted by Sobolev embedding into improved integrability. This is also a feature of our Strichartz estimates, which do give improved integrability---but the cutoff functions appearing above make this estimate local in time and velocity, which is quite different from what we are aiming at.

\subsubsection{Regularity in $v$ and Landau damping} After decomposing $f$ in Fourier series in $x$
\[
    f(x,v) = \sum_{k \in \mathbb{Z}^d} f_k(v) e^{ik\cdot x},
\]
it is easily computed that
\[
    \rho(t,x) = (2\pi)^{d/2} \sum_k \widehat{f_k}(kt) e^{ik\cdot x},
\]
where $\widehat{f_k}$ stands for the Fourier transform in $v$ of $f_k$. It is apparent from this formula that regularity in $v$ translates into time-decay for $\rho$. See \cite{Bedrossian}, Section 2 for a nice discussion and the connection with recent developments around Landau damping \cite{BedrossianMouhotVillani,MouhotVillani}.

Though greater regularity in $v$ of the initial data immediately gives better decay in $t$, our aim in the present paper is to obtain decay without assuming any regularity in $v$!

\subsection{Notations} If $f$ is a function on $\mathbb{R}^d$, we adopt the following normalization for the Fourier transform:
\[
    \widehat{f}(\xi) = \frac{1}{(2\pi)^{d/2}} \int_{\mathbb{R}^d} f(x) e^{-i x \cdot \xi} \dd x, \qquad f(x) = \frac{1}{(2\pi)^{d/2}} \int_{\mathbb{R}^d} \widehat{f}(\xi) e^{i x \cdot \xi} \dd \xi.
\]

If $A$ and $B$ are two quantities, we write
\begin{itemize}
    \item $A \lesssim B$ if there exists a constant $C$ such that $A \leq CB$,
    \item $A \sim B$ if $A \lesssim B$ and $B \lesssim A$,
    \item $A \ll B$ if $A \leq CB$ for a sufficiently small (depending on the context) constant $C$.
\end{itemize}

\subsection*{Acknowledgements} P.G. was supported by a startup grant from the Universit\"at Wien.

\section{The case of Euclidean space}
\label{section_Euclidean_space}

This section focuses on the Strichartz problem for the Liouville equation set in the whole Euclidean space. We review the state of the art and give short proofs of the known results, with some new ideas.

\subsection{The conjectures and their equivalent formulations} On $\mathbb{R}^d$, the density is given in terms of the density through the same formula as on $\mathbb{T}^d$, namely
\[
    \rho(t,x) = \int_{\mathbb{R}^d} f_0(x-tv,v) \dd v.
\]
We focus here on the question of boundedness of $f_0 \to \rho$ from $L^a_v L^b_x$ to $L^p_{t,x}$. The following conjecture appeared in \cite{Christ}, though in the dual form which will be explained below.

\begin{conj}
\label{conjecture1}
There holds
\[
    \| \rho \|_{L^p_{t,x}(\mathbb{R} \times \mathbb{R}^d)} \lesssim \| f_0 \|_{L^a_v (\mathbb{R}^d, L^b_x(\mathbb{R}^d))}
\]
if 
\[
    a'=dp, \;\;\;\;\;\; b = \frac {dp}{d+1}, \;\;\;\;\;\; p > \frac{d+1}{d}.
\]
\end{conj} 

Notice that, as opposed to the case of the torus, a weight in $v$ is not needed.

\medskip
\noindent \underline{The dual problem:} The dual of $f_0 \mapsto \rho$ is the modified $X$-ray transform
\[
    [\widetilde{X}f](x,v) = \int_{-\infty}^{+\infty} f(t,x+tv,v) \dd t,
\]
which maps functions on $\mathbb{R}\times\mathbb{R}^{d}$ to functions on $\mathbb{R}^{d}\times\mathbb{R}^d$.

\medskip

\noindent \underline{The $X$-ray transform.} Given a line $\ell$ in $\mathbb{R}^{d+1}$, we denote $\dd \sigma_\ell$ the associated surface measure and define the $X$-ray transform on functions on $\mathbb{R}^{d+1}$ through
\[
    [Xf](\ell) =  \int_\ell f \dd \sigma_\ell.
\]
Lines in $\mathbb{R}^{d+1}$ are naturally parameterized by their direction $\theta \in \mathbb{S}^d$ and the intersection $y$ of the line with the hyperplane orthogonal to $\theta$, which can be identified with $\mathbb{R}^d$. 

If $(x,v) \in \mathbb{R}^d \times \mathbb{R}^d$, we denote $\ell_{x,v}$ for the line in $\mathbb{R}^{d+1}$ through $(0,x)$ with direction $(1,v)$. Then
\[
    [X f](\ell_{x,v}) = \sqrt{1+v^2}\, [\widetilde{X} f](x,v).
\]
Observe that $X$ and $\widetilde{X}$ only differ by a bounded factor if $v$ is bounded.

A natural set of mixed norms in the set of lines is $L^{q_1}_\theta (\mathbb{S}^d,L^{q_2}_y(\mathbb{R}^{d}))$, and the relevant question the boundedness of $X$ from $L^r(\mathbb{R}^{d+1})$ to $L^{q_1}_\theta (\mathbb{S}^d,L^{q_2}_y(\mathbb{R}^{d}))$.

\medskip
\noindent \underline{Equivalent formulations of the conjecture.}
Under the scaling assumption 
\[
    a' =dp, \qquad b = \frac{dp}{d+1},
\]
the following statements are equivalent: 
\begin{equation*}
\begin{array}{rrcl}
    \rho : & L^a_v(\mathbb{R}^d, L^b_x(\mathbb{R}^d)) &\longrightarrow &L^p_{t,x}(\mathbb{R}^{d+1}) \\
    \widetilde{X} : &L^{p'}_{t,x}(\mathbb{R}^{d+1}) &\longrightarrow & L^{a'}_v(\mathbb{R}^d, L^{b'}_x(\mathbb{R}^d)) ,\\
    X : &L^{p'}_{t,x}(\mathbb{R}^{d+1}) &\longrightarrow &L^{a'}_\theta(\mathbb{S}^d, L^{b'}_y (\mathbb{R}^{d+1}).
\end{array}
\end{equation*}
Indeed, the two first assertions are equivalent by duality. To see that the second implies the third, we restrict velocities to $|v| \leq 1$ and observe that $X$ and $\widetilde{X}$ are equivalent, and the mixed norms as well. This gives boundedness of $X$ for lines whose directions are restricted to a cap of $\mathbb{S}^d$, and the full boundedness follows by isotropy. Finally, the same argument gives that the third assertion implies the boundedness of $\widetilde{X}$ restricted to velocities $\leq 1$, which can then be extended to all velocities by scaling.

\subsection{Known results}

The history of \Cref{conjecture1} is as follows
\begin{itemize}
    \item In case $d=1$, the $X$-ray transform coincides with the Radon transform, and the result is due to Oberlin--Stein \cite{OberlinStein}.
    \item The case $p \geq \frac{d+2}{d}$ was proven by Christ \cite{Christ} improving on Drury \cite{Drury} who was missing the endpoint.
    \item The case $p = \frac{d+2}{d}$ was rediscovered by Keel--Tao \cite{keelTao1998} who noticed that the result in Castella--Perthame \cite{CastellaPerthame1996} could be extended.
    \item Finally, the case $p=\frac{d+2}{d}$ can be viewed as a consequence of the Strichartz estimates for orthonormal families \cite{Sabin}.
\end{itemize}

Finally, another line of research \cite{LabaTao,Wolff,Oberlin} focuses on the case where $x$ and $v$ are bounded. This setting is directly related to the Kakeya problem and it allows to relax some of the constraints on $a$, $b$ and $p$.

\subsection{Proof of the case \texorpdfstring{$d=1$}{d=1} of \Cref{conjecture1}}

This result is equivalent to the optimal $X$-ray transform estimate in dimension 2, which was originally proved in Oberlin--Stein \cite{OberlinStein}. We propose here a different proof, inspired by the treatment of the Fourier restriction problem in dimension 2 by H\"ormander \cite{Hormander}.

\medskip

We  consider first the operator
\begin{equation}
    \label{eq.defT}
    \mathcal{T}[F](t,x) = \iint F(x-tv,x-tw,v,w) \dd v \dd w,
\end{equation}
which is linear in $F = F(y,z,v,w)$. On the one hand, we can see that
\[
    \| \mathcal{T}[F] \|_{L^\infty_{t,x}} \lesssim \| F \|_{L^1_{v,w} L^\infty_{y,z}}.
\]
On the other hand, using that the change of variables $(t,x) \mapsto (x-tv,x-tw)$ has Jacobian $|v-w|$, we get the estimate
\[
\| 
    \mathcal{T}[F] \|_{L^1_{t,x}} \leq \iiiint |F(x-tv,x-tw,v,w)| \dd v \dd w \dd t \dd x = \| F \|_{L^1_{v,w}(|v-w|^{-1} \dd v \dd w) L^1_{y,z}}.
\]
We interpolate between these two estimates and obtain 
\begin{equation}\label{TinLp}
    \|\mathcal{T}[F] \|_{L^p_{t,x}} \lesssim \| F \|_{L^1_{v,w}(|v-w|^{-\frac{ 1}{p}} \dd v \dd w) L^p_{y,z}} \qquad \text{if }1 \leq p \leq \infty.
\end{equation}
More specifically, we use \cite[Theorem 5.4.1]{BerghLofstrom} applied to the measures $\mathrm{d}v\mathrm{d}w$  and $|v-w|^{-1}\mathrm{d}v\mathrm{d}w$.
We now write
\[
    \rho^2(t,x) = \iint f(x-tv,v) f(x-tw,w) \dd v \dd w = \iint F(x-tv,x-tw,v,w) \dd v \dd w
\]
after setting
\[
    F(y,z,v,w) = f(y,v) f(z,w).
\]
By the estimate \eqref{TinLp}, we can bound for $p>2$
\begin{align*}
    \| \rho \|_{L^p_{t,x}}^2 & = \| \mathcal{T}[F] \|_{L^{\frac p2}_{t,x}} \lesssim \| F \|_{L^1_{v,w}(|v-w|^{-\frac 2p} \dd v \dd w) L^{\frac p2}_{y,z}} \\
    & = \iint |v-w|^{-\frac 2 p} \| f(y,v) \|_{L^{\frac p2}_y} \| f(z,w) \|_{L^{\frac p2}_z} \dd v \dd w \\
    & \lesssim \| f \|_{L^{p'}_v L^{\frac p2}_x}^2,
\end{align*}
where we have used the Hardy--Littlewood--Sobolev inequality in the last line. 

\subsection{The proof of the case \texorpdfstring{$p \geq \frac{d+2}{d}$}{p >= (d+2)/d} of \Cref{conjecture1}} By interpolation, it suffices to prove the case $p=\frac{d+2}{d}$, which is the inequality
\[
    \| \rho \|_{L^{\frac{d+2}{d}}_{t,x}} \lesssim \| f_0 \|_{L^{\frac{d+2}{d+1}}_{x,v}}.
\]
We will argue here through dispersive estimates and interpolation, following \cite{CastellaPerthame1996,keelTao1998}. The optimal result follows from a combination of these two arguments, which we integrate here for the sake of the reader. The first step is to obtain dispersive estimates.

\begin{lemma}[Dispersive estimates] If $p\in[1,\infty]$, then  
\[
    \|\rho(t)\|_{L^p_x} \lesssim t^{-d(1-\frac{1}{p})}\|f_0\|_{L^1_xL^{p}_v}
\]
and if $1 \leq r \leq p \leq \infty$, then
\begin{equation}
    \label{disp}
    \|f(t)\|_{L^p_xL^r_v} \lesssim t^{-d\left(\frac{1}{r}-\frac{1}{p}\right)} \|f_0\|_{L^r_xL^p_v}.
\end{equation}
\end{lemma}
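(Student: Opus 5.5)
Both estimates follow from the change of variables $v\mapsto y=x-tv$, whose Jacobian is $t^{-d}$, combined with Hölder or Minkowski. We fix $t>0$; the case $t<0$ is identical with $|t|$.

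\emph{First estimate.} We write
\[
\rho(t,x)=\int f_0(x-tv,v)\dd v = t^{-d}\int f_0\Bigl(y,\tfrac{x-y}{t}\Bigr)\dd y .
\]
Taking the $L^p_x$ norm and applying Minkowski's integral inequality in $y$ gives
\[
\|\rho(t)\|_{L^p_x}\le t^{-d}\int \Bigl\| f_0\Bigl(y,\tfrac{x-y}{t}\Bigr)\Bigr\|_{L^p_x}\dd y .
\]
In the inner norm we substitute $v=(x-y)/t$, so that $\dd x=t^d\dd v$ and the inner norm equals $t^{d/p}\|f_0(y,\cdot)\|_{L^p_v}$. This yields the bound $t^{-d(1-1/p)}\|f_0\|_{L^1_xL^p_v}$. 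The endpoints $p=1$ (plain Fubini) and $p=\infty$ are included in this computation.

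\emph{Second estimate.} We prove the two endpoints and then interpolate.

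\begin{itemize}
\item At $r=p$ the map $(x,v)\mapsto(x-tv,v)$ preserves measure, so $\|f(t)\|_{L^p_xL^p_v}=\|f_0\|_{L^p_{x,v}}$ with no decay.
\item At $r=1$, $p=\infty$ we need $\sup_x\int|f_0(x-tv,v)|\dd v\le t^{-d}\|f_0\|_{L^1_xL^\infty_v}$. This follows from the same substitution $y=x-tv$ together with $|f_0(y,\cdot)|\le\|f_0(y,\cdot)\|_{L^\infty_v}$.
\end{itemize}

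To obtain the general case $1\le r\le p$ directly, we rather compute
\[
\|f(t,x,\cdot)\|_{L^r_v}^r=t^{-d}\int\Bigl|f_0\Bigl(y,\tfrac{x-y}{t}\Bigr)\Bigr|^r\dd y .
\]
Taking the $L^{p/r}_x$ norm of this quantity and applying Minkowski (valid since $p/r\ge1$), the inner norm in $x$, after substituting $v=(x-y)/t$, becomes $t^{dr/p}\|f_0(y,\cdot)\|_{L^p_v}^r$. Hence
\[
\|f(t)\|_{L^p_xL^r_v}^r\le t^{-d+dr/p}\int\|f_0(y,\cdot)\|^r_{L^p_v}\dd y ,
\]
and taking $r$-th roots gives exactly \eqref{disp}.

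The only delicate point is applying Minkowski in the correct order, which requires the exponent condition $p\ge r$. This condition is precisely the one assumed in the statement.
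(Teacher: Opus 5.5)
Your argument is correct, but it reaches the general exponents by a different route than the paper. The paper establishes only two endpoint facts. The first is the Fubini bound $\|\rho(t)\|_{L^1_x}\le\|f_0\|_{L^1_{x,v}}$. The second is the pointwise dispersive bound $|\rho(t,x)|\le t^{-d}\|f_0\|_{L^1_xL^\infty_v}$, which comes from the same substitution $y=x-tv$ you use. Everything else is then obtained by complex interpolation: first between these two bounds, and then between the $(r,p)=(1,\infty)$ estimate and the conservation law $\|f(t)\|_{L^p_{x,v}}=\|f_0\|_{L^p_{x,v}}$.

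You instead carry out the substitution at the level of $\|f(t,x,\cdot)\|_{L^r_v}^r$ and apply Minkowski's integral inequality in $L^{p/r}_x$. This handles all $1\le r\le p\le\infty$ in one computation; the case $r=p=\infty$, where taking $r$-th powers makes no sense, is covered by your measure-preservation remark.

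Your route is entirely elementary. It produces the inequalities with constant $1$ in a few lines and avoids any appeal to vector-valued interpolation, in particular interpolation of mixed-norm spaces whose inner and outer norms are swapped between source and target. It also makes visible that the hypothesis $r\le p$ enters only through Minkowski's inequality with exponent $p/r\ge 1$. Your first estimate is in fact just the case $r=1$ of the second, since $|\rho(t,x)|\le\|f(t,x,\cdot)\|_{L^1_v}$, so that separate computation is redundant.

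The paper's route is more structural. It mirrors the ``dispersion plus conservation'' scheme behind Strichartz estimates for Schr\"odinger and reduces the lemma to endpoint facts that are immediate. The price is reliance on interpolation theory for mixed-norm spaces.
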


\begin{proof}
This follows from complex interpolation between the two following observations. First, by Fubini's theorem we have $\|\rho(t, \cdot)\|_{L^1_x} \leq \|f_0\|_{L^1_{x,v}}$. We also have the dispersive estimate
\[
    |\rho(t,x)| \leq \int_{\mathbb{R}^d} |f_0(x-tv,v)|\dd v = t^{-d}\int_{\mathbb{R}^d} \left|f_0\left(y,\frac{x-y}{t}\right)\right| \dd y \leq t^{-d}\|f_0\|_{L^1_xL^{\infty}_v}.
\]
For the more general inequality, one can interpolate with the case $(r,p)=(p,p)$, for which the inequality holds with constant $1$, by conservation of the $L^p$ norms under Liouville's equation. 
\end{proof}

From these dispersive estimates, we will derive Strichartz estimates of the type $L^a_{x,v} \to L^q_t L^p_x L^r_v$. Specializing to the case $q=p=\frac{d+2}{d}$, $r=1$ and $a = \frac{d+2}{d+1}$ gives the desired result.

\begin{theorem} \label{thm.stri}
Let $(q,p,r,a)\in[1,\infty]^4$ such that the following conditions hold: 
\begin{equation}
\label{eq.conditions.strichartz}
    \frac{2}{q}=d\left(\frac{1}{r}-\frac{1}{p}\right), \qquad \frac{1}{a}=\frac{1}{2}\left(\frac{1}{r}+\frac{1}{p}\right), \qquad p\geq a, \quad \text{and } q>a.
\end{equation}
Then there holds
\begin{equation}
\label{eq.strichartz}
    \|f\|_{L^q_t(\mathbb{R};L^p_xL^r_v)} \lesssim \|f_0\|_{L^a(\mathbb{R}^{2d})}. 
\end{equation}
\end{theorem}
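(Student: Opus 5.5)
This is the Castella--Perthame argument: a $TT^*$-type duality combined with the dispersive estimate \eqref{disp} and Hardy--Littlewood--Sobolev in time, as in the Keel--Tao treatment of the non-endpoint case. We use the $L^2$-free form suited to the kinetic setting. The pairing is
\[
\langle f(t), g(t)\rangle_{x,v} = \int\!\!\int f_0(x-tv,v)\, g(t,x,v) \dd x \dd v = \langle f_0, g(t,x+tv,v)\rangle,
\]
so the adjoint of $f_0 \mapsto f$ is $g \mapsto \int_{\mathbb{R}} g(s,x+sv,v)\dd s$. The estimate \eqref{eq.strichartz} is therefore equivalent to
\[
\Big\| \int_{\mathbb{R}} g(s,x+sv,v)\dd s\Big\|_{L^{a'}_{x,v}} \lesssim \|g\|_{L^{q'}_t L^{p'}_x L^{r'}_v}.
\]

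To bound the left-hand side we use a bilinear rather than a $TT^*$ argument, since the space is $L^a$ and not $L^2$. Set $G=\int g(s,x+sv,v)\dd s$. The conditions $\frac1a=\frac12(\frac1r+\frac1p)$ and $a\le p$ (hence $r\le a\le p$) make $L^{a'}$ the complex interpolation midpoint between $L^{p'}_xL^{r'}_v$ and $L^{r'}_xL^{p'}_v$. The product form comes from the inequality
\[
\|h\|_{L^{a'}_{x,v}}^2 \le \|h\|_{L^{p'}_xL^{r'}_v}\|h\|_{L^{r'}_xL^{p'}_v},
\]
which is Hölder applied pointwise to $|h|^{a'}=|h|^{a'/2}|h|^{a'/2}$ together with mixed-norm Hölder. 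The cleaner route is to write $\|G\|_{L^{a'}}^{a'}$ via duality as $\langle G, |G|^{a'-2}G\rangle$. A more robust alternative is to prove the bilinear estimate
\[
\Big|\iint \langle f_s(t-s)\,, g(t)\rangle \dd s\dd t\Big| \lesssim \|F\|\,\|g\|
\]
directly. Each interaction between times $s$ and $t$ is controlled by \eqref{disp}: transporting by time $t-s$ maps $L^r_xL^p_v\to L^p_xL^r_v$ with gain $|t-s|^{-d(\frac1r-\frac1p)}=|t-s|^{-2/q}$. One then integrates in $(s,t)$ with Hardy--Littlewood--Sobolev in time, which requires $q>2$ and $\frac1{q'}+\frac1{q'}+\frac2q=2$; these hold exactly by the scaling condition.

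Concretely, the plan is:
\begin{enumerate}
\item Reduce to the dual estimate above.
\item Interpolate between $L^{p'}_xL^{r'}_v$ and $L^{r'}_xL^{p'}_v$ to obtain the inequality $\|G\|_{L^{a'}}^2\lesssim\|G\|_{L^{p'}_xL^{r'}_v}\|G\|_{L^{r'}_xL^{p'}_v}$.
\item Expand $\|G\|^2$ as a double time integral $\iint \langle g(s)\,\text{transported by }t-s,\ g(t)\rangle\dd s\dd t$.
\item Apply \eqref{disp} in the form $L^{r'}$-type $\to L^{p'}$-type (valid since $r\le p$), giving the kernel $|t-s|^{-2/q}$.
\item Conclude with HLS in $t$, using $0<2/q<1$.
\end{enumerate}

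The main obstacle is step~2: converting the bilinear quantity into an honest $L^{a'}$ norm when $a\ne2$. The Hölder inequality in step~2 does not directly produce the bilinear form in step~3, because $\|G\|_{L^{a'}}^2$ is not itself an inner product. I would try to resolve this by interpolating the bilinear form $B(g_1,g_2)=\iint\langle S(t-s)g_1(s),g_2(t)\rangle$ with Stein's analytic family, varying $(r,p)$ symmetrically around $a$. At the symmetric point the estimate $\|G\|_{L^{a'}}\lesssim\|g\|$ then follows by testing $G$ against itself after using the hypothesis $q>a$; this hypothesis is exactly what is needed for the time-integrability at the interpolated exponents, as in Castella--Perthame. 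Failing that, I would quote the Keel--Tao abstract machinery with the ``$L^2$'' replaced by the pair $(L^a,L^{a'})$, as the paper itself suggests.
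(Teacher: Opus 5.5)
\paragraph{There is a genuine gap.} Your ingredients are right: duality, the dispersive bound \eqref{disp} giving the kernel $|t-s|^{-2/q}$, and Hardy--Littlewood--Sobolev in time. But the step you flag as the main obstacle is the heart of the proof, and you leave it open.
\begin{itemize}
\item Applying Hölder to $G$ itself, as in your step~2, produces $\|G\|_{L^{p'}_xL^{r'}_v}$ and $\|G\|_{L^{r'}_xL^{p'}_v}$, which nothing controls.
\item Testing against $|G|^{a'-2}G$ is circular: it just returns the original estimate.
\item The Stein family is not carried out. ``Keel--Tao with $L^2$ replaced by $(L^a,L^{a'})$'' is not meaningful as stated, since the abstract $TT^*$ machinery rests on a Hilbert-space inner product.
\end{itemize}
The missing idea is to square \emph{before} using Hölder. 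Write $\|G\|_{L^{a'}}^2=\|G^2\|_{L^{a'/2}}$ and expand $G(x,v)^2=\iint g(t,x+tv,v)\,g(s,x+sv,v)\dd s\dd t$ pointwise. Substitute $x\mapsto x-tv$, then use Minkowski's inequality in $L^{a'/2}_{x,v}$ to move the norm inside the $(s,t)$-integral. Only then apply mixed-norm Hölder with $\frac{2}{a'}=\frac1{p'}+\frac1{r'}$ to each product $g(t)\cdot S(t-s)g(s)$. Follow with \eqref{disp} from $L^{p'}_xL^{r'}_v$ to $L^{r'}_xL^{p'}_v$, and finally HLS.

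\paragraph{The range of exponents also needs a reduction you do not have.} The argument above needs $a\le 2$, so that $a'/2\ge1$; Minkowski fails in the quasi-normed space $L^{a'/2}$ when $a>2$. It also needs $q>2$. Your claim that $q>2$ ``holds exactly by the scaling condition'' is false for $d\ge2$. For instance, $d=2$, $a=\frac32$, $p=\frac52$ gives $r=\frac{15}{14}$ and $q=\frac{15}{8}\in(a,2)$, so $|t-s|^{-2/q}$ is not locally integrable.

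The paper closes both gaps with a power trick. The function $|f|^{1/\alpha}(t,x,v)=|f_0|^{1/\alpha}(x-tv,v)$ is again a solution, and $\||h|^{1/\alpha}\|_{L^{\alpha q}_tL^{\alpha p}_xL^{\alpha r}_v}=\|h\|^{1/\alpha}_{L^q_tL^p_xL^r_v}$. Hence the estimate for $(q,p,r,a)$ is equivalent to the one for $(\alpha q,\alpha p,\alpha r,\alpha a)$, and all conditions in \eqref{eq.conditions.strichartz} are invariant under this scaling.

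The hypothesis $q>a$ is used precisely to pick $\alpha$ with $\alpha q>2\ge\alpha a$; it is not about time-integrability at interpolated exponents. Taking $\alpha=2/a$ lands you exactly in $L^2_{x,v}$. There the $TT^*$ expansion is a genuine inner product, $q>a$ becomes the non-endpoint condition $\alpha q>2$, and your instinct to invoke the Hilbert-space argument becomes legitimate.
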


\begin{remark}
The conditions \eqref{eq.conditions.strichartz} are actually also necessary  for \eqref{eq.strichartz} to hold, see \cite{keelTao1998}. It is proven in \cite{GuoPeng2007} that there is no such estimate in the case $(d,q,p,a)=(1,2,\infty,2)$, but there is still an estimate if $L^{\infty}_x$ is replaced with ${\rm BMO}_x$. In the same article it is also shown that in all dimensions $d\geq 1$ the endpoint fails despite being conjectured in \cite{keelTao1998}.
\end{remark}

\begin{proof}[Proof of \Cref{thm.stri}] 
Assume first that the theorem has already been proven in the more restrictive case $q>2\geq a$. Now, let $(q,p,r,a)$ be as in the theorem but such that either $2\leq a<q$ or $a<q\leq 2$. Pick $\alpha>0$ such that $q\alpha := q_{\alpha} > 2 \geq \tilde a_{\alpha}=\alpha a$ and apply the restricted case to the function $f^{1/\alpha}$ solving the Liouville equation, and the exponents $(q_{\alpha},p_{\alpha},r_{\alpha},a_{\alpha})=(\alpha q, \alpha p, \alpha r, \alpha a)$. One can check that the conditions on these coefficients are met so that we have 
\[
    \|f^{\frac{1}{\alpha}}\|_{L^{q_{\alpha}}_tL^{p_{\alpha}}_xL^{r_{\alpha}}_v} \lesssim \|f_0^{\frac{1}{\alpha}}\|_{L^{a_{\alpha}}_{x,v}},
\]
which is nothing but the desired estimates.

We are left with the proof of the theorem in the case $q >2 \geq a$, which will appear as very convenient in the following, because of the application of Hardy--Littlewood--Sobolev's inequality and a square trick. 

Observe that the adjoint $S^*$ of the operator $S$ defined for each $t$ by $S(t)f_0(x,v)=f_0(x-tv,v)$ is given for all $\phi = \phi(t,x,v)$ by $S^*\phi(x,v)=\int_{\mathbb{R}}\phi(t,x+tv,v) \dd t = \int_{\mathbb{R}} S(-t)\phi(t)(x,v)\dd t$. 

Let's argue by duality: the boundedness of $S : L^a_{x,v} \to L^q_tL^p_xL^r_v$ is equivalent to that of its adjoint $S^* : L^{q'}_tL^{p'}_xL^{r'}_v \to L^{a'}_{x,v}$. since $a' \geq 2$, we can write, using a squaring trick:
\begin{align*}
    \|S^*\phi\|_{L^{a'}_{x,v}}^2 &= \left\| \iint \phi(t,x+tv,v)\phi(s,x+sv,v)\dd s \dd t \right\|_{L^{a'/2}_{x,v}} \\
    &= \left\| \iint \phi(t,x,v)\phi(s,x-(t-s)v,v)\dd s \dd t \right\|_{L^{a'/2}_{x,v}}.
\end{align*}
In the second equality we have used the change of variable $(x,v) \leftarrow (x-tv,v)$. 

An application of Young's integral inequality (since $\frac{a'}{2} \geq 1$), followed by Hölder's inequality with $\frac{2}{a'} = \frac{1}{p'} + \frac{1}{r'} = \frac{1}{r'} + \frac{1}{p'}$ yields
\begin{align*}
     \|S^*\phi\|_{L^{a'}_{x,v}}^2  & \leq \iint \|\phi(t)\|_{L^{p'}_xL^{r'}_v}\|S(t-s)\phi(s)\|_{L^{r'}_xL^{p'}_v}\dd s \dd t  \\ 
     &\leq \iint \|\phi(t)\|_{L^{p'}_xL^{r'}_v} |t-s|^{-\delta}\|\phi(s)\|_{L^{p'}_xL^{r'}_v}\dd s \dd t,
\end{align*}
with $\delta = d\left(\frac{1}{p'}-\frac{1}{r'}\right)=d\left(\frac{1}{r}-\frac{1}{p}\right)=\frac{2}{q} <1$. In the last inequality we have used the dispersion inequality \eqref{disp} applied to the couple $(r',p')$ (instead of $(p,r)$, and note that we have $1 \leq p'\leq r' \leq \infty$). 

The conclusion follows from an application of the Hardy--Littlewood--Sobolev inequality.
\end{proof}

\section{The Euclidean torus in dimension \texorpdfstring{$1$}{1}}

The aim of this section is to prove \Cref{thm.torus.1d}.

\subsection{Necessary condition}

We aim at proving that the inequality
\begin{equation}
\label{canardsiffleur}
\| \rho(t,x) \|_{L^{p}_{t,x}(\mathbb{R} \times \mathbb{T})} \lesssim \| f_0 \|_{L^{a}_v L^{b}_x},
\end{equation}
can only hold for all $f_0$ with zero average if
\[
    a = p', \quad b \geq \frac{p}{2} \quad \text{and} \quad p>2.
\]

\noindent \underline{The scaling argument.} We want to apply a classical scaling argument, but we need to be a bit careful, since the domain $\mathbb{T}$ cannot be scaled. We consider the data in $\mathbb{R}^d$
\[
    f_0(x,v) = \varphi(\lambda x) \psi(\mu v),
\]
where $\varphi,\psi \in \mathcal{C}^\infty_0$ and $\int \varphi(x) \dd x = 0$. The associated density is
\[
    \rho(t,x) = \int \varphi(\lambda(x-tv)) \psi(\mu v) \dd v = \mu^{-1} R(\lambda \mu^{-1} t, \lambda x),
\]
where
\[
    R(t,x) = \int \varphi(x-tv) \psi(v) \dd v.
\]
For a given $t$ it follows that
\[
    \operatorname{Supp} \rho(t,\cdot) \subset B\left( 0,C\left( \frac t \mu + \frac 1 \lambda \right) \right) \subset B\left(0,\frac 14 \right) \qquad \text{if} \;\; 
    \begin{cases}
        0 < t \ll \mu    \\ \lambda \gg 1.
    \end{cases}
\]
Under this condition, which we assume from now on, we can view $f_0(x,v)$ and $\rho(t,x)$ as functions on the torus. Then \eqref{canardsiffleur} implies that
\[
    \left\| \mu^{-1} R(\lambda \mu^{-1} t,\lambda x) \right\|_{L^p([0,c \mu] \times \mathbb{T})} \lesssim \left\|\varphi(\lambda x) \psi(\mu v) \right\|_{L^a_v L^b_x}
\]
which can also be written
\[
    \mu^{-1+\frac 1p} \lambda^{-\frac {2}p} \left\|  R(t,x) \right\|_{L^p([0, c \lambda] \times \mathbb{T})} \lesssim \lambda^{-\frac 1 b} \mu^{-\frac 1 a} \left\| \varphi(x) \psi(v) \right\|_{L^a_v L^b_x}.
\]
Considering the limits $\mu \to 0$, $\mu \to \infty$ and $\lambda \to \infty$ gives the conditions $a = p'$ and $b \geq \frac{p}{2}$. 

\medskip

\noindent
\underline{A function with a single frequency in $x$.} Choosing
\[
    f_0(x,v) = \varphi(v) e^{ix},
\]
the density then writes
\[
    \rho(t,x) = \widehat{\varphi}(t) e^{ix},
\]
and the inequality \eqref{canardsiffleur} becomes
\[
    \| \widehat{\varphi} \|_{L^p_t(\mathbb{R})} \lesssim \| \varphi \|_{L^a_v(\mathbb{R})},
\]
which is known to hold only if $p = a'$ and $p \geq 2$, see for instance \cite{Tao}. 

\medskip
\noindent
\underline{Why $p$ cannot be $2$.} There remains to exclude $p=2$, in which case the inequality would be
\[
    \|\rho \|_{L^2_{t,x}} \lesssim \| f_0 \|_{L^2_v L^1_x}.
\]
From \cref{lemmaL2} this would imply
\[
    \||\partial_x|^{-\frac 1 2} f_0 \|_{L^2_{x,v}} \lesssim \| f_0 \|_{L^2_v L^1_x}.
\]
Specializing to the case $f_0(x,v) = \varphi(x) \psi(v)$, this would mean that
\begin{equation}\label{sobolev.fail}
    \| |\partial_x|^{-\frac 1 2} \varphi \|_{L^2_{x}} \lesssim \| \varphi \|_{L^1_x},
\end{equation}
which is known to fail. Let's indeed consider $\varphi(x)=\chi_N(x)-1=N\chi(Nx)-1$ with $\chi \in \mathcal{C}_0^\infty$ of integral $1$. For $N$ sufficiently large, this function is well-defined on $\mathbb{T}$, has zero mean, and $\|\varphi\|_{L^{1}} \lesssim C < \infty$. Its Fourier coefficients are given by $\widehat{\chi_N}(k)=\widehat{\chi}\left(\frac{k}{N}\right)$, standing for the Fourier transform over $\mathbb{R}$ of $\chi$. Since $\widehat{\chi}(0) =1$, we have $|\widehat{\chi_N}(k)| > \frac{1}{2}$ if $|k| \leq \sqrt N$ and $N$ sufficiently large. Thus \eqref{sobolev.fail} would imply
\[
   \sum_{1 \leq |k|\leq \sqrt{N}} \frac{1}{|k|} \lesssim \sum_{|k|\geq 1} \frac{|\hat{\varphi}(k)|^2}{|k|} = \| |\partial_x|^{-\frac 1 2} \varphi \|_{L^2_{x}}^2 \lesssim \|\varphi\|_{L^1_x}^2 \leq C 
\]
which cannot hold as the left-hand side diverges as $N\to \infty$.

\subsection{The sufficient condition}
We aim at proving that the inequality
\begin{equation}
\label{canardsiffleur-bis}
    \| \rho(t,x) \|_{L^{p}_{t,x}(\mathbb{R} \times \mathbb{T})} \lesssim \| f_0 \|_{L^{a}_v L^{b}_x},
\end{equation}
holds for all $f_0$ with zero average if
\[
    a = p', \quad b \geq \frac{p}{2} \quad \text{and} \quad p>2.
\]
This result will be proved through a series of lemmas below. First, \Cref{lemmaLinf,lemmaL4,lemmaL2} provide estimates in $L^\infty_{t,x}$, $L^4_{t,x}$ and $L^2_{t,x}$ respectively. Second, these estimates are interpolated in \Cref{lemmainterp} to obtain all intermediate values of $p$.

\begin{lemma}[$L^\infty$ estimate]
\label{lemmaLinf}
For any $f_0 \in L^1_v L^\infty_x$ there holds 
\[
    \| \rho \|_{L^\infty_{t,x}(\mathbb{R} \times \mathbb{T})} \lesssim \| f_0 \|_{L^1_v L^\infty_x}.
\]
\end{lemma}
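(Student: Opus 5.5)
The estimate is a pointwise one and comes straight from the explicit representation of the density; no interpolation or Fourier analysis is involved. For fixed $(t,x) \in \mathbb{R}\times\mathbb{T}$ we have
\[
    |\rho(t,x)| = \left| \int_{\mathbb{R}} f_0(x-tv,v)\dd v \right| \leq \int_{\mathbb{R}} |f_0(x-tv,v)| \dd v .
\]
For each fixed $v$, the point $x-tv$ (taken modulo $2\pi$) is just some point of $\mathbb{T}$. Hence $|f_0(x-tv,v)| \leq \|f_0(\cdot,v)\|_{L^\infty_x(\mathbb{T})}$ for almost every $v$. Integrating in $v$ gives $|\rho(t,x)| \leq \|f_0\|_{L^1_v L^\infty_x}$, uniformly in $(t,x)$. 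Taking the supremum yields the claim, in fact with constant $1$.

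The only point needing care is measure-theoretic. The bound $|f_0(y,v)|\le \|f_0(\cdot,v)\|_{L^\infty}$ holds only for a.e.\ $y$, while we evaluate along the line $y=x-tv$, which is a null set in $(y,v)$. To handle this, I would first prove the estimate for continuous (or smooth, compactly supported in $v$) $f_0$, where the pointwise bound is literal. Then I would extend to general $f_0\in L^1_vL^\infty_x$. One way is density in a suitable weak sense. Alternatively, argue by duality: for $g\in L^1_{t,x}$, bound $\iint \rho g$. By Fubini and the measure-preserving change of variables $x\mapsto x-tv$ on $\mathbb{T}$ at fixed $(t,v)$, this is at most $\|f_0\|_{L^1_vL^\infty_x}\|g\|_{L^1_{t,x}}$. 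The duality route is clean and avoids any pointwise evaluation issue, so that is the one I would write.

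The zero-mean hypothesis plays no role here; the estimate holds for all such $f_0$.
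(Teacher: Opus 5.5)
Your proof is correct and follows the same route as the paper, which simply invokes the explicit formula for $\rho$, the triangle inequality and Minkowski's inequality. Your duality argument (Fubini plus the measure-preserving translation $x\mapsto x-tv$ on $\mathbb{T}$) is just a careful way of handling the almost-everywhere evaluation along lines, which the paper leaves implicit; it also confirms that the constant is $1$.
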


\begin{proof} 
This follows immediately from the formula expressing $\rho (t,x) = \int_{\mathbb{R}} f_0(x+tv,v)\dd v$, the triangle inequality and Minkowski's inequality. 
\end{proof}

\begin{lemma}[$L^4$ estimate]
\label{lemmaL4}
If $\int f_0(x,v) \dd x = 0$ for all $v$, then
\[
    \| \rho \|_{L^4_{t,x}(\mathbb{R} \times \mathbb{T})} \lesssim \| f_0 \|_{L^{\frac 43}_v L^2_x}.
\]
\end{lemma}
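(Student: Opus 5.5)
We prove the $L^4$ bound by squaring and expanding in Fourier series in $x$, which converts the problem to the Hardy--Littlewood--Sobolev inequality on $\mathbb{R}$.

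\textbf{Fourier representation.} Write $f_0(x,v)=\sum_{k\neq 0} f_k(v)e^{ikx}$; there is no $k=0$ term because of the zero mean assumption. Then
\[
\rho(t,x)=\sqrt{2\pi}\sum_{k\neq0}\widehat{f_k}(kt)e^{ikx}
\]
and
\[
\rho^2(t,x)=2\pi\sum_{n}e^{inx}\sum_{k+l=n}\widehat{f_k}(kt)\widehat{f_l}(lt).
\]
By Plancherel in $x$,
\[
\|\rho\|_{L^4}^4=\|\rho^2\|_{L^2}^2\sim\sum_n\int_{\mathbb{R}}\Big|\sum_{k+l=n}\widehat{f_k}(kt)\widehat{f_l}(lt)\Big|^2\dd t .
\]
It is cleaner to work directly in physical space. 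Write
\[
\rho^2(t,x)=\iint f_0(x-tv,v)f_0(x-tw,w)\dd v\dd w .
\]
Then
\[
\|\rho\|_{L^4}^4=\iiiint \Big(\int_{\mathbb{R}}\int_{\mathbb{T}} f_0(x-tv,v)f_0(x-tw,w)\overline{f_0(x-tv',v')f_0(x-tw',w')}\dd x\dd t\Big)\dd v\dd w\dd v'\dd w'.
\]
This is awkward. A better route is the Fourier side:
\[
\int_{\mathbb{R}}\widehat{f_k}(kt)\widehat{f_l}(lt)\overline{\widehat{f_{k'}}(k't)\widehat{f_{l'}}(l't)}\dd t .
\]

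\textbf{Simpler plan.} Use $\|\rho\|_{L^4}^2=\|\rho^2\|_{L^2}$ and mimic the $\mathcal T$ argument of the Euclidean $d=1$ proof, replacing the $L^1$ estimate by an $L^2$ estimate. Set $F(y,z,v,w)=f_0(y,v)f_0(z,w)$, so that $\rho^2=\mathcal T[F]$ with $\mathcal T$ as in \eqref{eq.defT}, now with $y,z\in\mathbb{T}$. Minkowski gives
\[
\|\rho^2\|_{L^2_{t,x}}\le\iint \|F_{v,w}(x-tv,x-tw)\|_{L^2_{t,x}(\mathbb{R}\times\mathbb{T})}\dd v\dd w .
\]
For fixed $v\neq w$, the map $(t,x)\mapsto(x-tv,x-tw)$ from $\mathbb{R}\times\mathbb{T}$ to $\mathbb{T}^2$ has Jacobian $|v-w|$. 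However, it is not injective: it covers $\mathbb{T}^2$ infinitely often as $t$ ranges over $\mathbb{R}$, so this pointwise-in-$(v,w)$ bound is infinite. The cancellation must therefore come from integrating in $v,w$ first. This is the main obstacle, and it is where zero mean must enter.

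\textbf{Working mode by mode.} Fix $(v,w)$ and expand $F_{v,w}(y,z)=\sum_{k,l\neq0}f_k(v)f_l(w)e^{i(ky+lz)}$. Then
\[
\mathcal T[F](t,x)=\sum_{k,l}e^{i(k+l)x}\iint e^{-it(kv+lw)}f_k(v)f_l(w)\dd v\dd w .
\]
The $L^2_x$ norm picks out $n=k+l$, and for each $n$ the function of $t$ is
\[
G_n(t)=\sum_{k}\iint e^{-it(kv+(n-k)w)}f_k(v)f_{n-k}(w)\dd v\dd w .
\]
Compute $\|G_n\|_{L^2_t}^2$ by integrating in $t$ first. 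This produces $\delta\big(kv+(n-k)w-k'v'-(n-k')w'\big)$, and the task is to bound the resulting quadrilinear form by $\prod\|f\|_{L^{4/3}_vL^2_x}$.

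The estimate I would aim for is a bilinear one. By Cauchy--Schwarz in $(k,k')$ and then Hardy--Littlewood--Sobolev in $(v,w)$ with kernel $|v-w|^{-1/2}$, exponents $\frac{1}{4/3}+\frac{1}{4/3}+\frac12=2$, one should get
\[
\|\rho^2\|_{L^2}^2\lesssim\Big(\iint |v-w|^{-\frac12}\|f_0(\cdot,v)\|_{L^2_x}\|f_0(\cdot,w)\|_{L^2_x}\dd v\dd w\Big)^2 ,
\]
which gives the lemma. The delicate point is showing that the delta-measure integral is bounded by $|v-w|^{-1}$-type weights uniformly in the frequencies, using $|k|,|n-k|\ge1$ to prevent degeneration. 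That lower bound is exactly what zero mean supplies.

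Concretely, change variables $s=kt$ in each pair, or use the Jacobian of $t\mapsto$ the phase, to reduce each $t$-integral to an $L^2_t$ Plancherel identity in the combined variable $kv+(n-k)w$. This yields the factor $\big(|k|\,|n-k|\,|v-w|\big)^{-1}$, and the factors $|k|^{-1}$ are summable after Cauchy--Schwarz.
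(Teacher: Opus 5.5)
\textbf{The central bilinear estimate is false on the torus.} The estimate your proposal commits to,
\[
\|\rho^2\|_{L^2_{t,x}(\mathbb{R}\times\mathbb{T})}\lesssim\iint|v-w|^{-\frac12}\,\|f_0(\cdot,v)\|_{L^2_x}\|f_0(\cdot,w)\|_{L^2_x}\dd v\dd w,
\]
fails already for single-mode data. Take $f_0=e^{ix}\varphi(v)$ with $\varphi=\sum_{j=1}^N\chi(v-jN)$, where $0\le\chi\in\mathcal{C}^\infty_0([-1,1])$ and $N\ge 5$. Then $\rho=\sqrt{2\pi}\,e^{ix}\widehat{\varphi}(t)$ and
\[
\|\rho^2\|_{L^2_{t,x}}\sim\|\widehat{\varphi}^{\,2}\|_{L^2}\sim\|\varphi*\varphi\|_{L^2}\sim N^{3/2}.
\]
This holds because the bumps $(\chi*\chi)(\cdot-mN)$ are disjoint, and $m=j+j'$ has multiplicity $\sim N$ for $\sim N$ values of $m$. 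The right-hand side, however, is $\lesssim N+\sum_{j\neq j'}(N|j-j'|)^{-1/2}\lesssim N$. The lemma itself is consistent here, since $\|\varphi\|_{L^{4/3}}^2\sim N^{3/2}$.

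Your inequality is exactly what Minkowski in $(v,w)$ plus the Euclidean Jacobian $|v-w|$ would give. You correctly noticed that this breaks down on $\mathbb{T}$. Integrating in $(v,w)$ first cannot rescue a bound of this shape, because velocity packets with $j_1+j_2=j_3+j_4$ interact at full strength however far apart they are. So Hardy--Littlewood--Sobolev in $(v,w)$ is not the mechanism on the torus. The $L^{4/3}_v$ integrability has to enter through Hausdorff--Young in $v$, mode by mode, as in the paper.

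\textbf{The last paragraph does not repair this.} Integrating in $t$ gives a delta on $kv+(n-k)w=k'v'+(n-k')w'$, whose Jacobian involves only the frequencies, not $|v-w|$. Done correctly, the $L^2_t$ norm of the $(k,n-k)$ term is the $L^2$ norm of a rescaled convolution of $f_k$ and $f_{n-k}$. It is bounded by Young's inequality, or equivalently by H\"older in $t$ and Hausdorff--Young as in the paper, and yields
\[
|k|^{-\frac14}|n-k|^{-\frac14}\,\|f_k\|_{L^{4/3}_v}\|f_{n-k}\|_{L^{4/3}_v},
\]
not $(|k|\,|n-k|\,|v-w|)^{-1}$.

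The remaining frequency sum is $\sum_{k_1+k_2=k_3+k_4}b_{k_1}b_{k_2}b_{k_3}b_{k_4}$, with $b_k=|k|^{-1/4}c_k$ and $c\in\ell^2$. This is an endpoint problem, because $|k|^{-1/4}\in\ell^{4,\infty}\setminus\ell^4$. Cauchy--Schwarz in $k$ produces the divergent sum $\sum_k|k|^{-1/2}|n-k|^{-1/2}$, and H\"older only places $b$ in $\ell^{4/3,2}$, not in $\ell^{4/3}$. The paper closes exactly this step with the H\"older--O'Neil bound $\|b\|_{\ell^{4/3,2}}\lesssim\|c\|_{\ell^2}$ and the Young--O'Neil inequality, then finishes with Minkowski and Parseval. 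Your proposal is missing both the correct per-mode gain and this Lorentz-space summation.
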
 

\begin{proof} 
Expanding $f_0$ in Fourier series in $x$
\[
    f_0(x,v) = \sum_{k \in \mathbb{Z}^*}  a_k(v) e^{i k x},
\]
we have
\[
    \rho(t,x) = \int_{\mathbb{R}} f_0(x-tv,v) \dd v = \sum_{k\in\mathbb{Z}^*} \widehat{a_k}(kt) e^{i k x}.
\]
Using this decomposition and H\"older's inequality gives
\begin{align*}
    \| \rho \|_{L^4_{t,x}}^4 & = \sum_{k_1+k_2 = k_3 + k_4} \int \widehat{a_{k_1}}(k_1 t) \widehat{a_{k_2}}(k_2 t) \overline{\widehat{a_{k_3}}(k_3 t) \widehat{a_{k_4}}(k_4 t)} \dd t \\
    & \leq \sum_{k_1+k_2 = k_3 + k_4} \| \widehat{a_{k_1}}(k_1 t) \|_{L^4_t} \| \widehat{a_{k_2}}(k_2 t) \|_{L^4_t} \| \widehat{a_{k_3}}(k_3 t) \|_{L^4_t} \| \widehat{a_{k_4}}(k_4 t) \|_{L^4_t} \\
    & =  \sum_{k_1+k_2 = k_3 + k_4} |k_1 k_2 k_3 k_4|^{-\frac 14} \| \widehat{a_{k_1}}(\eta) \|_{L^4_\eta} \| \widehat{a_{k_2}}(\eta) \|_{L^4_\eta} \| \widehat{a_{k_3}}(\eta) \|_{L^4_\eta} \| \widehat{a_{k_4}}(\eta) \|_{L^4_\eta}.
\end{align*}
We now set $b_k = |k|^{-\frac 14} \| \widehat{a_k}(\eta) \|_{L^4_\eta}$ so that the above can be written
\[
    \| \rho \|_{L^4_{t,x}}^4 = \langle b_k , b_{-k} * b_k * b_k \rangle,
\]
where $\langle \,,\, \rangle$ stands for the $\ell^2$ inner product. On the one hand, the H\"older--O'Neil inequality, recalled in \eqref{holder-oneil}, gives
\[
    \| b_k \|_{\ell^{\frac 43,2}_k} \lesssim \left\| \| \widehat{a_{k}}(\eta) \|_{L^4_\eta} \right\|_{\ell^2_k} \quad \text{and} \quad
\| \rho \|_{L^4_{t,x}}^4 \lesssim \| b_k \|_{\ell^{\frac 43,2}_k} \| b_{-k} * b_k * b_k \|_{\ell^{4,2}_k}.
\]
On the other hand, the Young--O'Neil inequality, recalled in \eqref{eq.young-oneil} and the inclusion relation between Lorentz spaces gives
\[
    \| b_{-k} * b_k * b_k \|_{\ell^{4,2}_k} \lesssim \| b_k \|_{\ell^{\frac 43,2}_k} \| b_k * b_k \|_{\ell^{2,\infty}_k} \lesssim \| b_k \|_{\ell^{\frac 43,2}_k}^3
\]
Overall, this gives
\[
    \| \rho \|_{L^4_{t,x}}^4 \lesssim \| \widehat{a_{k}}(\eta) \|_{\ell^2_k L^{4}_\eta}^4.
\]
Finally, using successively the Hausdorff--Young, Minkowski and Parseval inequalities, we obtain that
\[
    \| \rho \|_{L^4_{t,x}} \lesssim  \| \widehat{a_k} \|_{\ell^2_k L^{4}_\eta} \lesssim \| a_k(v) \|_{\ell^2_k L^{\frac 43}_v} \leq \| a_k(v) \|_{L^{\frac 43}_v \ell^2_k} = \| f_0 \|_{L^{\frac 43}_v L^2_x}. \qedhere
\]
\end{proof}

\begin{lemma}[$L^2$ estimate]\label{lemmaL2}
Assume that $\int_{\mathbb{T}} f_0(x,v) \dd x = 0$ for all $v$, then
\[
    \| \rho \|_{L^2_{t,x}(\mathbb{R} \times \mathbb{T})} = \| |D_x|^{-\frac 12} f_0 \|_{L^2_{x,v}}.
\]
\end{lemma}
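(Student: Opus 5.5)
The plan is to prove the identity with the same Fourier series decomposition in $x$ used in the proof of \Cref{lemmaL4}, combining Parseval in $x$ with Plancherel in $v$. In the $L^2$ case no Lorentz-space or Hausdorff--Young step is needed: the rescaling $\eta = kt$ produces the factor $|k|^{-1}$ exactly, and this factor is the symbol of $|D_x|^{-1}$.

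First I argue for $f_0$ in a dense class, say smooth in $x$ and Schwartz in $v$, and extend by density at the end. Write
\[
    f_0(x,v) = \sum_{k \in \mathbb{Z}^*} a_k(v) e^{ikx},
\]
where the term $k=0$ is absent because of the zero-mean assumption. Then, as computed in the introduction and in \Cref{lemmaL4},
\[
    \rho(t,x) = \sum_{k \neq 0} \int_{\mathbb{R}} a_k(v) e^{ik(x-tv)} \dd v = (2\pi)^{\frac12}\sum_{k\neq 0} \widehat{a_k}(kt)\, e^{ikx}.
\]

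For each fixed $t$, Parseval on $\mathbb{T}$ gives
\[
    \|\rho(t)\|_{L^2_x}^2 = (2\pi)^2 \sum_{k\neq0} |\widehat{a_k}(kt)|^2 .
\]
I then integrate in $t$ and exchange sum and integral, which is justified by Tonelli since all terms are nonnegative. In the $k$-th term I substitute $\eta = kt$, so that $\dd t = |k|^{-1}\dd \eta$, and apply Plancherel in $v$:
\[
    \|\rho\|_{L^2_{t,x}}^2 = (2\pi)^2 \sum_{k\neq0} |k|^{-1} \|\widehat{a_k}\|_{L^2_\eta}^2 = (2\pi)^2\sum_{k\neq 0} |k|^{-1}\|a_k\|_{L^2_v}^2 .
\]
On the other hand, Parseval in $x$ and Fubini in $v$ give
\[
    \||D_x|^{-\frac12} f_0\|_{L^2_{x,v}}^2 = 2\pi \sum_{k\neq0}|k|^{-1}\|a_k\|_{L^2_v}^2 .
\]
The two quantities therefore coincide up to a fixed normalization constant, which can be absorbed into the normalization of the Fourier series or of $|D_x|^{-1/2}$. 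This is how I read the equality in \Cref{lemmaL2}.

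The only point needing care is the extension from the dense class to general $f_0$. Both sides are continuous, indeed isometric up to the constant, with respect to the norm $\||D_x|^{-1/2}f_0\|_{L^2_{x,v}}$ on mean-zero data. So the map $f_0\mapsto\rho$ extends uniquely, and the extension agrees with the pointwise formula for $\rho$ whenever that formula converges. I do not expect a genuine obstacle here; the main thing is to keep the constants straight and to note explicitly that the vanishing of the zero mode is what makes $|k|^{-1}$ finite for every $k$ that appears.
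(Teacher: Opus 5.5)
Your proof is correct and is the paper's argument: Fourier series in $x$ with no zero mode, the formula $\rho = (2\pi)^{1/2}\sum_{k\neq 0}\widehat{a_k}(kt)e^{ikx}$, Parseval in $x$, the substitution $\eta = kt$ producing the factor $|k|^{-1}$, and Plancherel in $v$. Your constant tracking is also right: the paper's proof silently drops the normalization factors, so the stated equality only holds up to a fixed constant, which is how you read it.
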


\begin{proof} 
Writing
\[
    f_0(x,v) = \sum_{k\in\mathbb{Z}^*} a_k(v) e^{ikx}, 
\]
we have
\[
    \rho(t,x) = \sum_{k \in \mathbb{Z}^*} \widehat{a_k}(kt) e^{ikx},
\]
and thus, by Parseval's and Plancherel's theorem
\[
    \| \rho \|_{L^2_{t,x}}^2 = \sum_{k\in\mathbb{Z}^*} |k|^{-1} \| \widehat{a_k} \|_{L^2}^2 = \| |D_x|^{-1/2} f_0 \|_{L^2}^2. \qedhere
\]
\end{proof}

\Cref{thm.torus.1d} will be a consequence of the following interpolation lemma, and its consequence on subspaces. More precisely, if $\Pi : f \mapsto f - \int_{\mathbb{T}}f(\cdot, x)\dd x$, then if $\Pi : B_i \to B_i$ for all $i\in\{0,1\}$ then interpolation commutes with this projection \cite[Theorem 1.17.1.1]{Triebel1978}
\[
    (\Pi B_0, \Pi B_1)_{\theta, r} = \Pi (B_0,B_1)_{\theta,r}.
\]

\begin{lemma}[Interpolation] \label{lemmainterp} 
Let $T$ be a linear operator defined from $\mathcal{C}^{\infty}(\mathbb{R}_v \times \mathbb{T}_x)$ to $\mathcal{C}^{\infty}(\mathbb{R}_{t}\times \mathbb{T}_{x})$ be a linear operator with the mapping properties: 
\begin{align}
    T : L^1_vL^{\infty}_x \longrightarrow L^{\infty}_{t,x}, \label{eq.Linft-bound}\\ 
    T :  L^{\frac{4}{3}}_vL^{2}_x \longrightarrow L^{4}_{t,x}, \label{eq.L4-bound} \\ 
    T :  L^2_v\dot H^{-\frac{1}{2}}_x \longrightarrow L^{2}_{t,x}. \label{eq.L2-bound}
\end{align}
Then for all $p \in (2,\infty]$, $T$ is a continuous operator $T : L^{p'}_vL^{\frac{p}{2}}_x \longrightarrow L^p_{t,x}$.
\end{lemma}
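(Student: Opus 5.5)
The plan is to interpolate complex-wise along two segments in the $(\tfrac1p,\tfrac1a,\tfrac1b)$ parameter space. The segment from $(\infty,1,\infty)$ to $(4,\tfrac43,2)$ covers $p\in[4,\infty]$. The segment from $(4,\tfrac43,2)$ to the $L^2$ endpoint covers $p\in(2,4)$. Throughout, $T$ is restricted to the zero-mean subspaces $\Pi L^a_vL^b_x$. By the commutation property $(\Pi B_0,\Pi B_1)_\theta=\Pi(B_0,B_1)_\theta$ recalled before the lemma, we may interpolate the full mixed-norm spaces and project afterwards. For the mixed-norm spaces I will use the standard identity (Benedek--Panzone, or \cite[Theorem 5.1.2]{BerghLofstrom} for vector-valued $L^p$)
\[
[L^{a_0}_v(X_0),L^{a_1}_v(X_1)]_\theta=L^{a_\theta}_v([X_0,X_1]_\theta),\qquad \tfrac1{a_\theta}=\tfrac{1-\theta}{a_0}+\tfrac\theta{a_1},
\]
valid when $a_0,a_1<\infty$. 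This is the case here, since all the $v$-exponents lie in $[1,2]$.

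\emph{Case $p\in[4,\infty]$.} I interpolate \eqref{eq.Linft-bound} and \eqref{eq.L4-bound} with parameter $\theta=\tfrac4p$. The target is $[L^\infty_{t,x},L^4_{t,x}]_\theta=L^p_{t,x}$. On the source side, $\tfrac1a=(1-\theta)+\tfrac{3\theta}4=1-\tfrac1p$, so $a=p'$. Also $\tfrac1b=\tfrac\theta2=\tfrac2p$, so $b=\tfrac p2$, using $[L^\infty_x,L^2_x]_\theta=L^{p/2}_x$. This gives the claim directly.

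\emph{Case $p\in(2,4)$.} I interpolate \eqref{eq.L4-bound} and \eqref{eq.L2-bound} with $\tfrac1p=\tfrac{1-\theta}4+\tfrac\theta2$, that is $\theta=\tfrac4p-1\in(0,1)$. The target is $L^p_{t,x}$, and on the source side $\tfrac1a=\tfrac{3(1-\theta)}4+\tfrac\theta2=1-\tfrac1p$, so again $a=p'$. In $x$, restricted to zero-mean functions, $[L^2_x,\dot H^{-1/2}_x]_\theta=\dot H^{-s}_x$ with $s=\tfrac\theta2=\tfrac2p-\tfrac12$. This holds because both spaces are weighted $\ell^2$ spaces of Fourier coefficients, $|k|^{0}$ and $|k|^{-1/2}$, and Stein--Weiss gives the geometric mean of the weights. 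So $T:L^{p'}_v\dot H^{-s}_x\to L^p_{t,x}$, and it remains to show the continuous embedding $\Pi L^{p/2}_x\hookrightarrow \dot H^{-s}_x$ on $\mathbb T$.

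This embedding is the dual of the periodic Sobolev embedding $\dot H^{s}(\mathbb T)\hookrightarrow L^{q}(\mathbb T)$ with $\tfrac1q=\tfrac12-s$. Here $q=(p/2)'$, since $\tfrac12-s=1-\tfrac2p$. Equivalently, the multiplier $|k|^{-s}$ on zero-mean functions is a periodic fractional integral mapping $L^{p/2}\to L^2$, with $\tfrac2p-\tfrac12=s$ and $0<s<\tfrac12$. I would prove it by writing the kernel $\sum_{k\ne0}|k|^{-s}e^{ikx}=c|x|^{s-1}+O(1)$ near $0$, which is a standard Poisson-summation argument, and then applying Hardy--Littlewood--Sobolev on $\mathbb T$. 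The requirement $1<\tfrac p2<2$ is exactly $p\in(2,4)$, so the non-endpoint HLS suffices.

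Finally, for $b>\tfrac p2$ the estimate follows from the inclusion $L^b(\mathbb T)\subset L^{p/2}(\mathbb T)$, since the torus has finite measure, although the lemma as stated only needs $b=\tfrac p2$.

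The main obstacle I expect is the functional-analytic bookkeeping rather than any estimate. One has to justify complex interpolation of mixed-norm spaces, including the endpoint $L^\infty_x$, where the first interpolation space must be replaced by the closure of smooth functions and $\theta$ kept in $(0,1]$. One also has to check that the projection $\Pi$ is bounded on every space involved, which is clear since it is averaging in $x$, so that the cited commutation theorem applies. If complex interpolation with $\dot H^{-1/2}$ causes trouble, I would fall back on real interpolation $(\cdot,\cdot)_{\theta,2}$. This gives $L^{p'}_v$ with a Lorentz-type $x$-space that still contains $\Pi L^{p/2}_x$ by the same HLS argument in its Lorentz form.
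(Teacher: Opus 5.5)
Your proposal is correct and follows essentially the same route as the paper: complex interpolation of the $L^\infty$ and $L^4$ bounds for $p\ge 4$, and of the $L^4$ and $L^2$ bounds for $p\in(2,4)$, using the vector-valued identity $[L^{a_0}_v(X_0),L^{a_1}_v(X_1)]_\theta=L^{a_\theta}_v([X_0,X_1]_\theta)$, commutation with the zero-mean projection, and the dual Sobolev embedding $L^{p/2}_x\hookrightarrow \dot H^{\frac12-\frac2p}_x$ on zero-mean functions on $\mathbb{T}$. Your parameter $\theta=\frac4p-1$ for the second segment is the correct one; the paper's stated $\theta=1-\frac2p$ is a slip for $2-\frac4p$ in its orientation, and that corrected value is what actually produces the space $\dot H^{\frac12-\frac2p}_x$ the paper uses.
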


\begin{proof} 
Let $p \in (4, \infty)$ and $\theta = 1 - \frac{4}{p} \in (0,1)$, which is such that $L^p_{t,x}=[L^{4}_{t,x},L^{\infty}_{t,x}]_{\theta}$, the complex interpolation space \cite[Chapter 4]{BerghLofstrom}. Define $\frac{1}{p_\theta} = \frac{1-\theta}{4/3} + \theta = \frac{1}{p'}$.
From the complex interpolation theorem \cite[Theorem 4.1.2]{BerghLofstrom} we obtain 
\[
    T : [L^{\frac{4}{3}}_vL^{2}_x,L^1_vL^{\infty}_x]_{\theta} \to [L^{4}_{t,x},L^{\infty}_{t,x}]_{\theta}=L^p_{t,x}.
\]
We now make use of \cite[Theorem 5.1.2]{BerghLofstrom}, which yields 
\[
    [L^{\frac{4}{3}}_vL^{2}_x,L^1_vL^{\infty}_x]_{\theta}=L^{p(\theta)}_v([L^2_x,L^{\infty}_x]_{\theta})=L^{p'}_vL^{\frac{p}{2}}_x.
\]
Similarly, for any $p\in(2,4)$ and $\theta = 1-\frac{2}{p}$ we obtain the continuity: 
\[
    T : [L^2_v\dot H^{-\frac{1}{2}}_x,L^{\frac{4}{3}}_vL^2_x]_{\theta} \longrightarrow [L^2_{t,x},L^4_{t,x}]_{\theta}=L^p_{t,x}.
\]
Using again the property of interpolation of vector-valued $L^q$ spaces mentioned above and the complex interpolation of homogeneous Sobolev spaces there holds 
\[
    [\dot H_x^{-\frac{1}{2}},L^2_x]_\theta = \dot{H}^{\frac{1}{2}-\frac{2}{p}}_x. 
\]
The conclusion now follows from the fact that on the torus there holds $\dot{H}^{\frac{1}{2}-\frac{2}{p}}_x=H^{\frac{1}{2}-\frac{2}{p}}_x$, and the Sobolev embedding $L^{\frac{p}{2}}_x \hookrightarrow H^{\frac{1}{2}-\frac{2}{p}}_x$.
\end{proof}

\section{The Euclidean torus in dimension \texorpdfstring{$d \geq 2$}{>=}}

Our aim in this section is to prove \Cref{thm.noest-dim2}.

\subsection{Necessary conditions}

We shall prove here the necessary conditions in \Cref{thm.noest-dim2} for the inequality
\begin{equation}\label{gamma-p-a-b-ineq}
    \|\rho\|_{L^p_{t,x}} \lesssim \||v|^{\gamma}f_0\|_{L^a_vL^b_x},
\end{equation}
to hold for all $f_0$ with mean zero in $x$.

\medskip
\noindent\underline{The conditions $\gamma = d(1-\frac{1}{a})-\frac{1}{p}$ and $p\leq b \frac{d+1}{d}$.} They are obtained through a scaling argument as follows. Data of the form 
\[
    f_0(x,v) = \varphi(\lambda x) \psi(\mu v), \qquad \varphi, \psi \in \mathcal{C}_0^\infty(\mathbb{R}^d), \qquad \int \varphi = 0
\]
give $\rho(t,x)=\mu^{-d}R(\lambda\mu^{-1}t,\lambda x)$ for a function $R$. As long as $0<t\ll \mu$ and $\lambda \gg 1$, this can be viewed as a function on $\mathbb{T}^d$. The inequality becomes
\[
    \mu^{-d+\frac{1}{p}}\lambda^{-\frac{d+1}{p}}\|R\|_{L^p([0,c\lambda ] \times \mathbb{T}^d)} \lesssim \lambda^{-\frac{d}{b}}\mu^{-\gamma - \frac{d}{a}} \|\varphi(x)\psi(v)\|_{L^a_vL^b_x},
\]
which gives $\gamma = d(1-\frac{1}{a})-\frac{1}{p}$ and $p\leq b \frac{d+1}{d}$ after letting $\mu \to \pm \infty$ and $\lambda \to + \infty$.

\medskip
\noindent\underline{The condition $a \geq p'$.} It is obtained by examining the case of single mode data of the form
\begin{equation}
    \label{singlemodeansatz}
    f_0(x,v)=\varphi_1(v_1) \varphi_2(v') e^{ix_1},
\end{equation}
where $v_1 \in \mathbb{R}$ and $v' \in \mathbb{R}^{d-1}$ make up $v=(v_1,v')$.
Then \eqref{withweight} becomes
\begin{equation}
    \label{newineq}
    \widehat{\varphi_2}(0) \|\widehat{\varphi_1}\|_{L^p} \lesssim \||v|^{\gamma} \varphi_1(v_1) \varphi_2(v')\|_{L^a_v}.
\end{equation}
Choosing $\varphi_1(v_1) = \Phi(\lambda v_1)$ and $\varphi_2$ localized near the sphere $|v'| = 1$ such that $\widehat{\varphi}_2(0)\neq 0$, we obtain that $\varphi = \varphi_1 \varphi_2$ is localized near $|v|=1$ as $\lambda \to \infty$. The inequality becomes
\[
    \lambda^{-1+\frac 1p} \widehat{\varphi_2}(0) \|\widehat{\Phi}\|_{L^p} \lesssim \lambda^{-\frac 1 a} \| \Phi \|_{L^a} \| \varphi_2 \|_{L^a},
\]
which gives the desired condition as $\lambda \to \infty$.

\medskip
\noindent
\underline{The condition $a\neq p'$.} It relies on a refinement of the previous example: we choose now
\[
    \varphi_1 \in \mathcal{C}^\infty_0, \qquad \operatorname{supp} \varphi_1 \subset \left\{ \frac{1}{2}\leq |v_1| \leq 1\right\},  \qquad \varphi_2(v') = \langle v' \rangle^{-(d-1)} \mathbf{1}_{2 \leq |v'|<N}.
\]
Observe first that
\[
    \widehat{\varphi_2}(0) = \int \varphi_2 \sim \log N.
\]
Second, the condition $a=p'$ gives $\gamma = \frac{d-1}{p}$ and
\begin{align*}
    \||v|^{\gamma} \varphi_1(v_1) \varphi_2(v')\|_{L^a_v} & \sim \| \langle v' \rangle^{\gamma} \varphi_2(v') \|_{L^a_{v'}} \sim \| \langle v' \rangle^{\gamma -(d-1)} \mathbf{1}_{|v'| \lesssim N} \|_{L^{a}_{v'}} \\
    & \sim \| \langle v' \rangle^{-\frac{d-1}{p'}}  \mathbf{1}_{2 \leq |v'| \leq N} \|_{L^{p'}} \sim (\log N)^{\frac{1}{p'}}.
\end{align*}
Comparing the two above equivalents and \eqref{newineq} gives $\log N \lesssim (\log N)^{\frac{1}{p'}}$, which excludes $p < \infty$ as $N \to \infty$.

\medskip
\noindent\underline{The condition $a\leq p$.}
Consider a single mode zero-average initial data of the form 
\[
    f_0(x,v)=e^{ix_1}\psi_R(v),
\]
where $x=(x_1,x')\in \mathbb{T} \times \mathbb{T}^{d-1}$ and where $\psi_R$ is defined for all $v=(v_1,v') \in \mathbb{R} \times\mathbb{R}^{d-1}$ by 
\[
    \psi_R(v_1,v') = \frac{\mathbf{1}_{1<v_1<R}\mathbf{1}_{|v'|<v_1}}{v_1^{\frac{1}{p'}+(d-1)}}.
\]
We begin by computing for $t >0$
\[
    \rho(t,x)=e^{ix_1}\int_{\mathbb{R}^d} e^{-itv_1}\psi_R(v)\mathrm{d}v = e^{ix_1}\int_{1}^R e^{-itv_1}v_1^{-\frac{1}{p'}}\mathrm{d}v_1 = e^{ix_1}t^{-\frac{1}{p}}\int_{t}^{tR}e^{-i\tau}\tau^{-\frac{1}{p'}}\dd \tau.
\]
It follows that
\[
    \|\rho\|_{L^p_{t,x}}^p \geq \int_{0}^{\infty} \frac{1}{t} \left\vert \int_{t}^{tR}e^{-i\tau}\tau^{-\frac{1}{p'}}\dd \tau \right\vert^p \dd t.
\]
The function $H(a,b)= \int_{a}^{b}e^{-i\tau}\tau^{-\frac{1}{p'}}\dd \tau$ converges as $(a,b) \to (0,\infty)$ to some non-zero complex number. Therefore, there exists $\delta$, $M$ and $\varepsilon$ such that $|H(a,b)|>\varepsilon$ for $a < \delta$ and $b>M$. Then, if $R>\frac M \delta$ and $t \in [\frac{M}{R},\delta ]$, we have $|H(t,tR)|>\varepsilon$ and therefore
\begin{equation}
    \label{eq.LHS-bound}
    \|\rho\|_{L^p_{t,x}}^p \gtrsim \varepsilon^p\int_{\frac{M}{R}}^{\delta} \frac{\dd t}{t} \sim \log R. 
\end{equation}
Next, we write 
\[
    \||v|^{\gamma}f_0(x,v)\|_{L^a_vL^{b}_x}^a \simeq \int_{1}^R\int_{|v'|<v_1} (v_1^2+(v')^2)^{\frac{\gamma a}{2}}v_1^{-a(d-1)-\frac{a}{p'}} \dd v' \dd v_1.
\]
Using that on the support of the integral there holds $(v')^2 \leq v_1^2$ and $\gamma a - a(d-1)-\frac{a}{p'} = -d$ we infer 
\begin{equation}
    \label{eq.RHS-bound}
     \||v|^{\gamma}f_0(x,v)\|_{L^a_vL^{b}_x}^a \lesssim \int_1^R \frac{\dd v_1}{v_1} \sim \log R.
\end{equation}
In view of \eqref{eq.LHS-bound} and \eqref{eq.RHS-bound}, it must hold $(\log R)^{\frac{1}{p}} \lesssim (\log R)^{\frac{1}{a}}$, which implies $a \leq p$.

The above is valid for finite $p$. Assume $p=\infty$ and consider the same function. The right-hand side is still estimated by $(\log R)^{\frac{1}{a}}$, and for the left-hand side we now have: 
\[
    \|\rho\|_{L^{\infty}_{t,x}} \geq |\rho(0,0)|=\int_1^R \frac{\dd v_1}{v_1} = \log R.
\]
Therefore we must have $\log R \lesssim (\log R)^{\frac{1}{a}}$ which forces $a\leq 1$ and therefore $a=1$.

\medskip

\noindent \underline{The condition $p \geq 2$.} It is a consequence of $p' \leq a \leq p$.

\subsection{Sufficient conditions} We shall prove here the sufficient conditions in \Cref{thm.noest-dim2} for the inequality
\begin{equation}\label{gamma-p-a-b-ineq-suff}
    \|\rho\|_{L^p_{t,x}} \lesssim \||v|^{\gamma}f_0\|_{L^a_vL^b_x},
\end{equation}
to hold for all $f_0$ with zero-mean in $x$.
We start with a single mode estimate.

\begin{lemma}[Single mode estimate]\label{lem.mono-mode}
Let $2<p<\infty$, $p'<a\leq p$, and $\gamma=\frac{d}{a'}-\frac{1}{p}$. 
Let $k\in\mathbb Z^d\setminus\{0\}$, and let $f(x,v)=f_k(v)e^{ik\cdot x}$. Then there holds $\rho(f)(t,x)=\sqrt{2\pi}\rho_k(t)e^{ik\cdot x}$ and
\[
    \|\rho_k\|_{L^p(\mathbb R_t)} \lesssim |k|^{-\frac{1}{p}} \||v|^\gamma f_k\|_{L^a_v(\mathbb R^d)}.
\]
\end{lemma}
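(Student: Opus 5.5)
The plan is to reduce to a one-dimensional Fourier transform, pay for the weight through a Hölder inequality in the transverse variables, and conclude with a Lorentz-space Hausdorff--Young inequality.

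\emph{Step 1: reduction to one dimension.} For $f=f_k(v)e^{ik\cdot x}$ we have
\[
\rho(t,x)=e^{ik\cdot x}\int_{\mathbb R^d} e^{-itk\cdot v}f_k(v)\dd v .
\]
The weight $|v|^\gamma$ and the norm $L^a_v$ are rotation invariant, so after a rotation in $v$ we may assume $k=|k|e_1$. Writing $v=(v_1,v')$ and $F(v_1)=\int_{\mathbb R^{d-1}} f_k(v_1,v')\dd v'$, we get $\rho_k(t)=c\,\widehat F(|k|t)$. The change of variables $t\mapsto |k|t$ then gives
\[
\|\rho_k\|_{L^p_t}=c\,|k|^{-\frac1p}\|\widehat F\|_{L^p}.
\]
It therefore suffices to prove $\|\widehat F\|_{L^p(\mathbb R)}\lesssim \||v|^\gamma f_k\|_{L^a_v}$.

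\emph{Step 2: transverse Hölder.} For fixed $v_1$, apply Hölder in $v'$:
\[
|F(v_1)|\le g(v_1)\,\big\||v|^{-\gamma}\big\|_{L^{a'}_{v'}},\qquad g(v_1):=\big\||v|^\gamma f_k(v_1,\cdot)\big\|_{L^a_{v'}},
\]
so that $\|g\|_{L^a(\mathbb R)}=\||v|^\gamma f_k\|_{L^a_v}$. The factor $\int_{\mathbb R^{d-1}}(v_1^2+|v'|^2)^{-\gamma a'/2}\dd v'$ is finite and equal to $C|v_1|^{d-1-\gamma a'}$ exactly when $\gamma a'>d-1$. Since $\gamma a'=d-\frac{a'}{p}$, this condition is $a'<p$, that is, $a>p'$, which is one of our hypotheses. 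Raising the factor to the power $\frac1{a'}$ and inserting $\gamma=\frac d{a'}-\frac1p$, we obtain
\[
|F(v_1)|\lesssim |v_1|^{-\sigma}g(v_1),\qquad \sigma=\frac1{a'}-\frac1p .
\]
Here $\sigma\ge 0$ because $a\le p$ and $p>2$.

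\emph{Step 3: Lorentz-space Hausdorff--Young.} The function $|v_1|^{-\sigma}$ lies in $L^{1/\sigma,\infty}$ and $g$ lies in $L^a=L^{a,a}$. By the Hölder--O'Neil inequality \eqref{holder-oneil}, $|v_1|^{-\sigma}g\in L^{r,a}$ with
\[
\frac1r=\frac1a+\sigma=\frac1{p'}.
\]
The Hausdorff--Young inequality in Lorentz form (obtained from the $L^1\to L^\infty$ and $L^2\to L^2$ bounds by real interpolation) maps $L^{p',a}\to L^{p,a}$, since $1<p'<2$. Because $a\le p$, we have the embedding $L^{p,a}\hookrightarrow L^{p,p}=L^p$. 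Chaining these bounds gives
\[
\|\widehat F\|_{L^p}\lesssim \|F\|_{L^{p',a}}\lesssim \big\||v_1|^{-\sigma}g\big\|_{L^{p',a}}\lesssim \|g\|_{L^a}.
\]
The middle step uses the pointwise domination from Step 2 together with the lattice property of Lorentz norms.

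An equivalent route for Step 3 is Pitt's inequality $\|\widehat F\|_{L^p}\lesssim\||v_1|^{\sigma}F\|_{L^a}$, which holds for $1<a\le p<\infty$ with $\sigma=1-\frac1p-\frac1a\ge0$. The main obstacle is precisely this weighted Fourier bound in one variable. In particular, the degenerate case $\sigma=0$ corresponds to $a=p$, where the argument reduces to plain Hausdorff--Young, and one has to check that the hypotheses $p'<a\le p$ are exactly what the Lorentz exponents require. Step 2 is routine once one notes that the strict inequality $a>p'$ is needed for the transverse integral to converge.
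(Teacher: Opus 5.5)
Your argument is correct and is essentially the paper's proof, run on the primal side instead of the dual side. The paper dualizes to $\||w|^{-\gamma}\widehat h(w_1)\|_{L^{a'}(\mathbb{R}^d)}\lesssim\|h\|_{L^{p'}(\mathbb{R})}$. It then integrates out $w'$, which converges exactly when $a>p'$, and applies H\"older--O'Neil, the Lorentz Hausdorff--Young bound $L^{p'}\to L^{p,p'}$, and the embedding $L^{p,p'}\hookrightarrow L^{p,a'}$ (which uses $a\le p$). Your steps are the adjoint ones: H\"older in $v'$, H\"older--O'Neil into $L^{p',a}$, then $L^{p',a}\to L^{p,a}$ and $L^{p,a}\hookrightarrow L^p$.

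Two side remarks are wrong, though the main chain is unaffected. First, $\sigma=\frac1{a'}-\frac1p>0$ holds because $a>p'$, not because $a\le p$. Second, $\sigma=0$ corresponds to the excluded endpoint $a=p'$, not to $a=p$; at $a=p$ one has $\sigma=1-\frac2p>0$, so that case is not plain Hausdorff--Young.
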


\begin{proof} 
Write $e_k=\frac{k}{|k|}$. We have
\[
    \rho(f)(t,x) = e^{ik\cdot x}\int_{\mathbb R^d}f_k(v)e^{-itk\cdot v}dv = \sqrt{2\pi} e^{ik\cdot x}\widehat g_k(t|k|e_k),
\]
so that $\rho_k(t)=\widehat{f}_k(t|k|e_k)$. Therefore
\[
    \|\rho_k\|_{L^p_{t}(\mathbb{R})} \sim |k|^{-\frac{1}{p}} \|\widehat f_k(te_k)\|_{L^p_t(\mathbb R)}.
\]
It is therefore enough to prove that for all smooth functions $g : \mathbb{R}^d \to \mathbb{R}$ there holds 
\[
    \|\widehat g(t e)\|_{L^p_t(\mathbb R)} \lesssim \||v|^\gamma g\|_{L^a_v(\mathbb R^d)} \qquad \mbox{if $e \in \mathbb{S}^{d-1}$.}
\]
By rotation invariance, assume $e=(1,0, \dots, 0)$. Our task is therefore reduced to proving that for any function $g : \mathbb{R}^d \to \mathbb{R}$ there holds
\begin{equation}
    \label{eq.target-mono}
    \|\widehat{g}(t, 0)\|_{L^{p}_t(\mathbb{R})} \lesssim \||v|^{\gamma}g\|_{L^a_v(\mathbb{R}^d)}.
\end{equation}
By duality, \eqref{eq.target-mono} is implied by 
\begin{equation}
    \label{eq.target-dual}
    \||w| ^{-\gamma}\widehat{h}(w_1)\|_{L^{a'}(\mathbb{R}^d_w)} \lesssim \|h\|_{L^{p'}(\mathbb{R})}.
\end{equation}  
Assume indeed that \eqref{eq.target-dual} holds. Since
\[
    \int_{\mathbb{R}} \widehat{g}(t,0)h(t)\mathrm{d}t = \frac{1}{(2\pi)^{\frac{d}{2}}} \int_{\mathbb{R}}\int_{\mathbb{R}^d}e^{-itw_1}g(w)h(t)\dd w \dd t = \frac{1}{(2\pi)^{\frac{d-1}{2}}} \int_{\mathbb{R}} |w| ^{\gamma}g(w)|w|^{-\gamma}\widehat{h}(w_1) \dd w,
\]
the inequality \eqref{eq.target-mono} now follows from the Hölder inequality, \eqref{eq.target-dual} and the dual characterization of $L^p(\mathbb{R})$.

Next, we write
\[
    \||w|^{-\gamma}\widehat{h}(w_1)\|_{L^{a'}(\mathbb{R}^d)}^{a'} = \int_{\mathbb{R}} |\widehat{h}(w_1)|^{a'} \int_{\mathbb{R}^{d-1}} \left(w_1^2+|w'|^2\right)^{-\frac{\gamma a'}{2}}\mathrm{d}w' \mathrm{d}w_1,
\]
where $w=(w_1,w')$, $w' \in \mathbb{R}^{d-1}$. Since $a>p'$ we deduce $\gamma a' = d - \frac{a'}{p} >d-1$ and therefore for $w_1\neq 0$ we have
\[
    \int_{\mathbb{R}} \left(w_1^2+|w'|^2\right)^{-\frac{\gamma a'}{2}}\mathrm{d}w' \sim | w_1|^{(d-1)-\gamma a'}. 
\]
We infer that
\[
    \||w|^{-\gamma}\widehat{h}(w_1)\|_{L^{a'}(\mathbb{R}^d_w)} \lesssim \||z|^{-\beta} \widehat{h}(z)\|_{L^{a'}(\mathbb{R})}, 
\]
where $\beta = \frac{1}{a'} - \frac{1}{p} \in (0,1]$. We can use the Hölder--O'Neil inequality \eqref{holder-oneil} to bound 
\[
    \||z|^{-\beta} \widehat{h}(z)\|_{L^{a'}(\mathbb{R})} =\||z|^{-\beta} \widehat{h}(z)\|_{L^{a',a'}(\mathbb{R})}\lesssim \||z|^{-\beta}\|_{L^{\frac{1}{\beta},\infty}}\|\widehat{h}\|_{L^{p,a'}} \lesssim \|\widehat{h}\|_{L^{p,a'}}. 
\]
Because $a'\geq p'$ it follows that 
\[
    \|\widehat{h}\|_{L^{p,a'}} \lesssim \|\widehat{h}\|_{L^{p,p'}} \lesssim \|h\|_{L^{p'}},
\]
where the last inequality stems from an application of the real-interpolation version of the classical Hausdorff--Young inequality, see \eqref{eq.hausdorffYoung-real}.
\end{proof}

\begin{proposition}\label{prop.horizontal-line}
Let $p\in (2,\frac{2(d+1)}{d}]$. Then for $\gamma = \frac{d}{a'}-\frac{1}{p}$, $a=2$ and $b\geq \frac{dp}{d+1}$, the estimate \eqref{eq.homogenous-d} holds.
\end{proposition}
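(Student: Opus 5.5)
The plan is to combine the single mode estimate of \Cref{lem.mono-mode} in $v$ (with $a=2$) with a Fourier-side argument in $x$ built from Hausdorff--Young and Hölder--O'Neil on $\mathbb{Z}^d$. Since $\mathbb{T}^d$ has finite measure, it suffices to treat the endpoint $b=\frac{dp}{d+1}$. The hypothesis $p\le \frac{2(d+1)}{d}$ is exactly what guarantees $b\le 2$, which is needed for Hausdorff--Young in $x$.

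\emph{Step 1 (one Fourier mode).} Write $f_0=\sum_{k\neq 0} f_k(v)e^{ik\cdot x}$, so that $\rho(t,x)=\sqrt{2\pi}\sum_{k\ne0}\rho_k(t)e^{ik\cdot x}$. Since $a=2$ satisfies $p'<2\le p$, \Cref{lem.mono-mode} applies to each mode and gives
\[
\|\rho_k\|_{L^p_t}\lesssim |k|^{-\frac1p}\,c_k,\qquad c_k:=\||v|^\gamma f_k\|_{L^2_v}.
\]

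\emph{Step 2 (from modes to the torus).} For each fixed $t$, I will use the Lorentz-refined Hausdorff--Young inequality on $\mathbb{T}^d$ (dual to \eqref{eq.hausdorffYoung-real}), valid for $p\ge2$:
\[
\|\rho(t)\|_{L^p_x}\lesssim\|\rho_k(t)\|_{\ell^{p',p}_k}.
\]
Taking the $L^p_t$ norm, I then want to move $L^p_t$ inside the $\ell^{p',p}_k$ quasi-norm. This Minkowski-type exchange is the step I expect to be the main obstacle, since $\ell^{p',p}$ is not an $L^q$ space. I would justify it by using that $\ell^{p',p}$ (with $p'<p$) is $p$-concave up to constants, possibly after renorming. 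If that fails, the fallback is to prove the estimate for two nearby Lebesgue exponents and recover the Lorentz version by real interpolation. Granting the exchange, Step 1 yields
\[
\|\rho\|_{L^p_{t,x}}\lesssim \big\||k|^{-\frac1p}c_k\big\|_{\ell^{p',p}_k}.
\]

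\emph{Step 3 (Hölder--O'Neil and the choice of $b$).} On $\mathbb{Z}^d$ one has $|k|^{-1/p}\in\ell^{dp,\infty}$. The Hölder--O'Neil inequality \eqref{holder-oneil}, with $\frac1{p'}=\frac1{dp}+\frac1{b'}$, then gives
\[
\big\||k|^{-\frac1p}c_k\big\|_{\ell^{p',p}}\lesssim\|c_k\|_{\ell^{b',p}}.
\]
This relation is equivalent to $\frac1b=\frac{d+1}{dp}$, which is precisely the scaling exponent. Since $b\le p$, we have $\|c\|_{\ell^{b',p}}\lesssim\|c\|_{\ell^{b',b}}$.

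\emph{Step 4 (back to $f_0$).} Because $b'\ge2$, Minkowski's inequality exchanges $L^2_v$ and the $\ell^{b',b}_k$ norm. The real-interpolation Hausdorff--Young inequality \eqref{eq.hausdorffYoung-real} (valid as $b\le2$) then gives
\[
\|c_k\|_{\ell^{b',b}_k}\lesssim\big\|\,\|(|v|^\gamma f_k)_k\|_{\ell^{b',b}_k}\big\|_{L^2_v}\lesssim\||v|^\gamma f_0\|_{L^2_vL^b_x}.
\]
Here the Minkowski exchange is harmless: $\ell^{b',b}\subset\ell^{b'}$ sits on the favourable side, or alternatively one interpolates.

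Chaining Steps 1--4 proves \eqref{eq.homogenous-d} at $b=\frac{dp}{d+1}$, and hence for all $b\ge\frac{dp}{d+1}$.
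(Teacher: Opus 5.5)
\textbf{The gap is in Step 4, not Step 2.} Step 2 works for the reason you give. Write $\|c\|_{\ell^{p',p}}^p=\sum_{n\ge1}n^{p-2}\,(|c|^p)^*_n$; the weight is increasing, so by the rearrangement inequality this quantity is superadditive in $|c|^p$. That is exactly the $p$-concavity needed to bring $L^p_t$ inside.

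In Step 4, write $G_k=|v|^\gamma f_k$. The exchange $\big\|\|G_k\|_{L^2_v}\big\|_{\ell^{b',b}_k}\lesssim\big\|\|G_k\|_{\ell^{b',b}_k}\big\|_{L^2_v}$ would need $\ell^{b',b}$ to be $2$-convex. It is not when $b<2$, i.e.\ for every $p<\frac{2(d+1)}{d}$. The inclusion $\ell^{b',b}\subset\ell^{b'}$ points the wrong way: it is the stronger norm $\ell^{b',b}$ that must be controlled on the left.

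In fact the whole claim $\|c\|_{\ell^{b',b}}\lesssim\||v|^\gamma f_0\|_{L^2_vL^b_x}$ is false. Take $|v|^\gamma f_0=\sum_{i=1}^M\mathbf{1}_{B_i}(v)\phi_i(x)$, where the $B_i$ are disjoint of unit measure and $\phi_i$ is the mean-zero part of $2^{i/b}\chi(2^{i/d}x)$, with $\chi\in\mathcal{C}^\infty_0$ and $\int\chi\neq0$.
\begin{itemize}
\item Since $\|\phi_i\|_{L^b}\sim1$, the right-hand side is $\sim M^{1/2}$.
\item Since $|\widehat{\phi_i}(k)|\gtrsim2^{-i/b'}$ for $1\le|k|\ll2^{i/d}$, you get $c_k\gtrsim|k|^{-d/b'}$ for $1\le|k|\ll2^{M/d}$. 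Hence $\|c\|_{\ell^{b',b}}\gtrsim M^{1/b}\gg M^{1/2}$.
\end{itemize}
The loss is created at the end of Step 3, where you upgrade $\ell^{b',p}$ to $\ell^{b',b}$ before exchanging with $L^2_v$.

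\textbf{The repair is to exchange first.} Since $p>2$, $\|c\|_{\ell^{b',p}}\lesssim\|c\|_{\ell^{b',2}}$. Moreover $\|c\|_{\ell^{b',2}}^2=\sum_n n^{\frac{2}{b'}-1}(|c|^2)^*_n$ has a nonincreasing weight (as $b'\ge2$), so it is a norm in $|c|^2$. Minkowski then gives
$\|c\|_{\ell^{b',2}}\le\big\|\|G_k(v)\|_{\ell^{b',2}_k}\big\|_{L^2_v}\lesssim\big\|\|G_k(v)\|_{\ell^{b',b}_k}\big\|_{L^2_v}\lesssim\||v|^\gamma f_0\|_{L^2_vL^b_x}$,
using $b\le2$ and the Lorentz Hausdorff--Young inequality for each fixed $v$.

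Repaired this way, your argument is a Lorentz-space unpacking of the two Sobolev embeddings in the paper's proof. The paper bounds $\|\rho(t)\|_{L^p_x}$ by the $\dot H^{\frac{d}{2}-\frac{d}{p}}$ norm, moves $L^p_t$ inside $\ell^2_k$ by Minkowski, applies \Cref{lem.mono-mode}, and concludes with $L^b_x\hookrightarrow\dot H^{\frac{d}{2}-\frac{d+1}{p}}_x$ for each $v$. Staying in $\ell^2_k$ throughout makes the frequency and $L^2_v$ norms commute by Fubini, so no convexity or concavity of Lorentz spaces is ever needed.
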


\begin{proof} Write 
\[
  f_0(x,v)=\sum_{k\in\mathbb{Z}^d \setminus \{0\}} f_k(v)e^{ik\cdot x}, \qquad \rho(t,x) =\sqrt{2\pi} \sum_{k\in\mathbb{Z}^d \setminus \{0\}} \rho_k(t) e^{ik\cdot x},
\]
with $\rho_k(t)=\widehat{f}_k(tk)$. By virtue of the Sobolev embedding theorem $\dot{H}^s(\mathbb{R}^d) \hookrightarrow L^p(\mathbb{R}^d)$, we can bound for any $t \in \mathbb{R}$ 
\[
    \|\rho(t)\|_{L^p_x(\mathbb{R}^d)} \lesssim \|\rho(t,\cdot)\|_{\dot{H}^s(\mathbb{R}^d} \lesssim \left( \sum_{k\in\mathbb{Z}^d \setminus \{0\}} |k|^{2s}|\rho_k(t)|^2\right)^{\frac{1}{2}}, \qquad s = \frac{d}{2} - \frac{d}{p}.
\]    

Since $p>2$, an application of the Minkowski inequality followed by \Cref{lem.mono-mode} applied with $a=2$ and $\gamma=\frac{d}{a'}-\frac{
1}{p}$ yields
\[
    \|\rho\|_{L^p_{t,x}} \lesssim \left( \sum_{k\in\mathbb{Z}^d \setminus \{0\}} |k|^{2s}\|\rho_k\|_{L^p(\mathbb{R})}^2\right)^{\frac{1}{2}} \lesssim 
    \left(\sum_{k\in\mathbb{Z}^d \setminus \{0\}} |k|^{2s-\frac{2}{p}}\||v|^{\gamma}f_k\|^2_{L^2_v}\right)^{\frac{1}{2}} = \||D_x|^{s-\frac{1}{p}}|v|^{\gamma}f_0\|_{L^2_{x,v}}.
\]
Observe that $s-\frac{1}{p} = \frac{d}{2} - \frac{d+1}{p} \leq 0$ whenever $p \leq \frac{2(d+1)}{d}$, which is our assumption. Therefore, a final application of the Sobolev embedding theorem gives for all $b\geq \frac{dp}{d+1}$,
\[
    \|\rho\|_{L^p_{t,x}} \lesssim \||v|^{\gamma}f_0\|_{L^2_vL^b_x}. \qedhere
\]
\end{proof}

\begin{proof}[Proof of \Cref{thm.noest-dim2}~(ii)] 
Assume that $(p,a,b)$ satisfy
\[
    2<p<\infty, \quad p'<a \leq a_*(p) := \min\left\{2,\frac{dp}{dp-(d+1)}\right\},\quad b\geq b_*(p):=\frac{dp}{d+1}.
\]
The theorem follows from interpolation between the estimate: 
\[
    \|\rho\|_{L^{\infty}_{t,x}} \lesssim \|f_0\|_{L^1_vL^{\infty}_x},
\]
and the estimates from \Cref{prop.horizontal-line}: 
\[
    \|\rho\|_{L^{q}_{t,x}} \lesssim \||v|^{d-\frac{2}{q}}f_0\|_{L^2_vL^{b_*(q)}_x}, \qquad 2 < q \leq \frac{2(d+1)}{d}.
\]
In order to be precise, set $\theta = \frac{2}{a'} \in (0,1)$. Take $(p_1,a_1,b_1)=(\frac{2p}{a'},2,b_*(p_1))$,  $\frac{1}{p_1'}=\frac{a'}{2p'}-\frac{a'}{2}+1$. We can check indeed that $p_1 \leq \frac{2(d+1)}{d}$.
Set $(p_0,a_0,b_0)=(\infty,1,\infty)$. It follows from \Cref{prop.interpolation}~(i) that:\footnote{Strictly speaking we are interpolating the zero-mean in $x$ subspace of these spaces, which follows since the projection onto this subspace is continuous in both endpoint norms, see \cite[Theorem 1.17.1.1]{Triebel1978}.}
\[
    \left[L^1(dv;L^{\infty}_x),L^2\left(|v|^{d-\frac{2}{p_1}}dv,L^{b_*(p_1)}\right)\right]_{\theta} = L^{a_{\theta}}(|v|^{a_{\theta}\gamma_{\theta}}dv;L^{b_{\theta}}), 
\]
so that the real interpolation theorem implies the bound 
\[
    \|\rho\|_{L^{p_{\theta}}_{t,x}} \lesssim \||v|^{\gamma_{\theta}} f_0\|_{L^{a_{\theta}}L^{b_{\theta}}}.
\]
It remains to observe that $\frac{1}{p_{\theta}} = \frac{1-\theta}{\infty} + \frac{\theta}{p_1} = \frac{1}{p}$, $\frac{1}{a_{\theta}} = \frac{1-\theta}{a_0} + \frac{\theta}{a_1} = \frac{1}{a}$, $\frac{1}{b_{\theta}} = \frac{1-\theta}{\infty} + \frac{\theta}{b_*(p_1)} = \frac{1}{b_*(p)}$ and $\gamma_{\theta}=\frac{d}{a'}-\frac{1}{p}=\gamma$.
\end{proof}

\begin{proposition}\label{prop.substitute}
The estimate \eqref{eq.homogenous-d} holds for $(p,a,b)$ in the following cases:
\begin{enumerate}[label=(\roman*)]
    \item if $L^{a}$ is replaced with the Lorentz space $L^{a,r}$, $r<a_*(p)$;
    \item if $b>\frac{dp}{d+1}$ and $p'<a<p$;
    \item if $|v|^{\gamma}$ is replaced with $|v|^{\gamma}h_{\varepsilon}(v)$, with $h_{\varepsilon}(v)=(|v|^{-\varepsilon}\mathbf{1}_{|v|<1} + |v|^{\varepsilon}\mathbf{1}_{|v|>1})$. 
\end{enumerate}
\end{proposition}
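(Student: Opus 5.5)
The organising observation is that, on the scaling line $\gamma=\frac{d}{a'}-\frac1p$, the weighted norms for different $a$ are norms of one and the same function. Indeed $\gamma a = da-d-\frac ap$, so
\[
    \||v|^{\gamma}f_0\|_{L^a_vL^b_x} = \big\| F \big\|_{L^a(\mu;L^b_x)}, \qquad F(x,v)=|v|^{d-\frac1p}f_0(x,v), \qquad \dd\mu = |v|^{-d}\dd v .
\]
For fixed $p$ and $b=b_*(p)$, \Cref{thm.noest-dim2}~(ii) therefore says that $F\mapsto\rho$ is bounded from $L^{a}(\mu;L^{b_*}_x)$ to $L^p_{t,x}$ for every $a\in(p',a_*(p)]$.

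\emph{Part (i).} Pick $p'<a_0<a<a_1\le a_*(p)$ and apply real interpolation $(\cdot,\cdot)_{\theta,r}$ to the two estimates just described. On the target side $(L^p,L^p)_{\theta,r}=L^p$. On the source side the vector-valued Lorentz interpolation (\Cref{prop.interpolation}, with the zero-mean projection handled as in the footnote via \cite[Theorem 1.17.1.1]{Triebel1978}) gives $L^{a,r}(\mu;L^{b_*}_x)$. Translated back, this is the estimate with $\||v|^\gamma f_0\|_{L^{a,r}_vL^{b_*}_x}$; the Lorentz norm is taken with respect to $\mu$, which is the natural one here. The only check needed is that the interpolation theorem applies to an $L^b$-valued space over the non-finite measure $\mu$, which is standard.

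\emph{Part (iii).} Decompose $f_0=\sum_{j\in\mathbb Z}f_j$ with $f_j=f_0\mathbf 1_{|v|\sim2^j}$. On a shell, $h_\varepsilon\sim2^{\varepsilon|j|}$ and $|v|^{\gamma}\sim 2^{j\gamma}$. Given $a\in(p',p)$, choose $\tilde a\in(p',a_*(p)]$ with $\tilde a\le a$, and let $\tilde\gamma$ be the corresponding scaling exponent. Hölder on the shell gives
\[
    \||v|^{\tilde\gamma}f_j\|_{L^{\tilde a}_vL^{b}_x}\lesssim 2^{j\tilde\gamma+jd(\frac1{\tilde a}-\frac1a)}\|f_j\|_{L^a_vL^b_x}=2^{j\gamma}\|f_j\|_{L^a_vL^b_x},
\]
since $\tilde\gamma+\frac d{\tilde a}=\gamma+\frac da$. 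Applying \Cref{thm.noest-dim2}~(ii) on each shell therefore yields $\|\rho(f_j)\|_{L^p}\lesssim\||v|^\gamma f_j\|_{L^a_vL^{b_*}_x}$ with a constant uniform in $j$. The triangle inequality followed by Hölder in $j$ against the summable factor $2^{-\varepsilon|j|}$ then gives the claim, including $b=b_*$.

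\emph{Part (ii).} Here $h_\varepsilon$ is absent, so the gain must come from $b>b_*$. I would additionally decompose Littlewood--Paley in $x$, at frequency $N=2^m$. On a block $(j,m)$, Bernstein lets us trade $L^b_x$ for $L^{b_*}_x$ up to the factor $N^{d(\frac1{b_*}-\frac1b)}$, and the shell argument of (iii) controls each block at the endpoint. The missing ingredient is decay in $|j-j_0(m)|$ for some $j_0(m)$. I would obtain it from a second, off-scaling estimate on each block: use \Cref{lem.mono-mode} mode by mode, which already covers all $a\le p$, combined with the Sobolev/orthogonality argument of \Cref{prop.horizontal-line} at frequency $N$. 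This should give a bound growing like a power of $N$ but carrying a factor $2^{-c|j-j_0(m)|}$, with the opposite imbalance to the endpoint bound. Taking a geometric mean of the two block bounds should produce a doubly summable factor $2^{-c'(|j-j_0|+m)}$ whenever $b>b_*$.

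I expect the main obstacle to be this bookkeeping in (ii): making the two block estimates have genuinely opposite exponents in both $j$ and $m$, so that the double sum converges. If that fails, my fallback is to interpolate, for the zero-mean data, between the endpoint family in (i) and the single-mode estimate in the $b$-direction, using the factor $N^{-d(\frac1{b_*}-\frac1b)}$ as the source of summability.
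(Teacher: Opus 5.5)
Your part (iii) is correct. It is a shell-by-shell version of the paper's argument, which does the same H\"older step globally by writing $|v|^{\gamma(p,\alpha)}f_0=m_\varepsilon\,|v|^{\gamma}h_\varepsilon f_0$ with $m_\varepsilon=|v|^{-d/s}h_\varepsilon^{-1}\in L^s_v$.

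Part (i), however, misses the actual content of the statement. Real interpolation between $a_0<a_1\le a_*(p)$ only yields exponents $a<a_*(p)$, where \Cref{thm.noest-dim2}~(ii) already applies. The point of (i) is the range $a_*(p)<a\le p$ (the paper's proof begins ``we only have to consider the case $a>a_*(p)$''), and interpolation cannot extrapolate past its endpoints. Measuring the Lorentz norm with respect to $\mu=|v|^{-d}\dd v$ makes this worse. Since $\mu(\mathbb{R}^d)=\infty$, $L^{a,r}(\mu)$ does not embed in $L^r(\mu)$ for $a>r$, so in that normalisation there is no way down from $a$ to an exponent covered by the theorem. In the statement the Lorentz norm is that of $|v|^\gamma f_0$ with respect to $\dd v$, and then the mismatch of weights is an $L^{s,\infty}$ function. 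For $p'<r\le a_*(p)<a$ and $\frac1s=\frac1r-\frac1a$ one has $\gamma(p,a)-\gamma(p,r)=\frac ds$, so by H\"older--O'Neil
\[
    \||v|^{\gamma(p,r)}f_0\|_{L^r_vL^b_x}\lesssim \||v|^{-\frac ds}\|_{L^{s,\infty}}\,\||v|^{\gamma(p,a)}f_0\|_{L^{a,r}_vL^b_x},
\]
and \Cref{thm.noest-dim2}~(ii) at exponent $r$ concludes. This is exactly the global form of the H\"older step you already do on each shell in (iii).

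Part (ii) is not a proof, and the mechanism you propose cannot work. On the torus, $f_0(x,v)\mapsto f_0(x,2v)$ is an exact symmetry. It turns $\rho(t,x)$ into $2^{-d}\rho(t/2,x)$, preserves the zero mean and the $x$-frequency localisation, and moves the shell $|v|\sim 2^j$ to $|v|\sim 2^{j-1}$. On the line $\gamma=\frac d{a'}-\frac1p$, both sides of \eqref{eq.homogenous-d} scale by the same factor $2^{-d+\frac1p}$. Hence the best constant of a block estimate in terms of $\||v|^\gamma f\|_{L^a_vL^b_x}$ does not depend on $j$. There is no $j_0(m)$: a factor $2^{-c|j-j_0(m)|}$ would force that constant to vanish. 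So the sum over velocity shells cannot be closed by the triangle inequality, and \Cref{lem.mono-mode}, being itself scale-invariant in $v$, provides no such decay.

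The paper uses $b>b_*(p)$ not for summability in frequency but to let $p$ move:
\begin{itemize}
\item choose $p_0<p<p_1$ close to $p$ with $b\ge b_*(p_i)$;
\item define $a_i$ by $\gamma(p_i,a_i)=\gamma$, which forces $\frac{1-\theta}{a_0}+\frac\theta{a_1}=\frac1a$;
\item take the bounds $\mathcal{L}^{a_i,1}_{|v|^\gamma}(\dd v;L^b_x)\to L^{p_i}_{t,x}$ from (i), and interpolate with $(\cdot,\cdot)_{\theta,a}$.
\end{itemize}
The source space becomes $L^a(|v|^{a\gamma}\dd v;L^b_x)$ by \Cref{prop.interpolation}~(ii), and the target is $(L^{p_0},L^{p_1})_{\theta,a}=L^{p,a}\hookrightarrow L^p$ because $a\le p$; the gain comes from the target side. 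In the relevant case $a>a_*(p)$, the $a_i$ lie beyond $a_*(p_i)$. So this step needs (i) precisely in the range your version of (i) does not reach, and your fallback, which rests on that same family, inherits the gap.
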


\begin{proof} 
In the following write $\gamma(p,a)=\frac{d}{a'}-\frac{1}{p}$.

\medskip
\noindent \textit{(i)} We only have to consider the case $a>a_*(p)$. Start with $r$ such that $p'<r \leq a_*(p)$ and write 
\[
    \|\rho\|_{L^p_{t,x}} \lesssim \||v|^{\gamma(p,r)}f_0\|_{L^r_vL^{b}_{x}}
\]
Since $\gamma(p,a)-\gamma(p,r)=d\left(\frac{1}{r}-\frac{1}{a}\right) = \frac{d}{s}$ for some $s>0$, we have $|\cdot|^{-(\gamma(p,a)-\gamma(p,r))} \in L^{s,\infty}(\mathbb{R}^d)$. 
It remains to apply the Hölder--O'Neil inequality \eqref{holder-oneil} in order to obtain
\[
    \|\rho\|_{L^p_{t,x}} \lesssim \||v|^{\gamma(p,a)}f_0\|_{L^{a,r}_vL^b_x},
\]
which is the targeted estimate.

\medskip 
\noindent \textit{(ii)} 
Fix $p'<a<p$ and $b>b_*(p)$. Choose $p_0<p<p_1$ sufficiently close to $p$ such that $ b\ge b_*(p_i)$ which is possible because $b>b_*(p)=\frac{dp}{d+1}$ is a continuous increasing function of $p$. Let $\theta\in(0,1)$ such that $\frac{1}{p}=\frac{1-\theta}{p_0}+\frac{\theta}{p_1}$. Observe now that it is possible to choose $a_0$ and $a_1$ in order to satisfy $\gamma(p_i,a_i)=\gamma$. Since the $p_i$ can be chosen arbitrarily close to $p$, it follows that we can ensure $p_i'<a_i<p_i$. Also, observe that we have $\frac{1-\theta}{a_0} + \frac{\theta}{a_1} = \frac{1}{a}$.

The Lorentz substitute estimates from \textit{(i)} that we use are the following bounds 
\[
    \mathcal{L}^{a_i,1}_{|v|^{\gamma}}(dv;L^b_x) \longrightarrow L^{p_i}_{t,x}, \qquad i \in \{0, 1 \},
\]
so that the real interpolation theorem gives the continuity: 
\[
    \left( \mathcal{L}^{a_0,1}_{|v|^{\gamma}}(dv;L^b_x), \mathcal{L}^{a_1,1}_{|v|^{\gamma}}(dv;L^b_x) \right)_{\theta,a} \longrightarrow (L^{p_0}_{t,x},L^{p_1}_{t,x})_{\theta,a}.
\]
It remains to observe that $(L^{p_0}_{t,x},L^{p_1}_{t,x})_{\theta,a}=L^{p,a}_{t,x} \hookrightarrow L^p_{t,x}$ because $a \leq p$, and 
\[ 
    \left( \mathcal{L}^{a_0,1}_{|v|^{\gamma}}(dv;L^b_x), \mathcal{L}^{a_1,1}_{|v|^{\gamma}}(dv;L^b_x) \right)_{\theta,a} = \mathcal{L}^{a,a}_{|v|^{\gamma}}(dv;L^b_x)=L^a(|v|^{a\gamma}dv,L^b_x),
\]
which stems from an application of \Cref{prop.interpolation}~(ii). 

\medskip
\noindent\textit{(iii)} Assume $a>a_*(p)$ and take $p'<\alpha< \min\{a,a_*(p)\}$. One can therefore write $\frac{1}{\alpha}-\frac{1}{a}=\frac{1}{s}$ for some $s>0$ and 
\[
    \gamma(p,a)-\gamma(p,\alpha) =d\left(\frac{1}{\alpha}-\frac{1}{a}\right) =\frac{d}{s}.
\]
Write $|v|^{\gamma(p,\alpha)}f = m_{\varepsilon}(v)|v|^{\gamma(p,a)}h_{\varepsilon}(v)f$ where $m_{\varepsilon}(v)=|v|^{\gamma(p,\alpha)-\gamma(p,a)}h_{\varepsilon}(v)^{-1}=|v|^{-\frac{d}{s}}h_{\varepsilon}(v)^{-1}$. For $|v|\ll 1$ there holds $|m_{\varepsilon}(v)| \lesssim |v|^{-\frac{d}{s}+\varepsilon}$, and for $|v| \gg 1$ there holds $|m_{\varepsilon}(v)| \lesssim |v|^{-\frac{d}{s}-\varepsilon}$, so that $m_{\varepsilon}\in L^s(\mathbb{R}^d)$. 

It remains to use the strong bound with $(p,\alpha)$ to bound 
\[
    \|\rho\|_{L^p_{t,x}} \lesssim
    \||v|^{\gamma(p,\alpha)}f\|_{L^\alpha_vL^b_x} \lesssim \|m_{\varepsilon}\|_{L^s_v}\||v|^{\gamma(p,a)}h_{\varepsilon}(v)f\|_{L^a_vL^b_x} \lesssim \||v|^{\gamma(p,a)}h_{\varepsilon}(v)f\|_{L^a_vL^b_x},
\]
where we have used the Hölder inequality.
\end{proof}

\section{The Kakeya problem on the cylinder}

\label{section_proof_Kakeya}

This section is devoted to the proof of \Cref{thm.Kakeya}.

\subsection{Proof of the inequality}

Write 
\[ 
    T_k = \left\{(t,x) \in\mathbb{R}^{d+1} : |t-t_k| \leq \frac{R}{2}, \quad  \operatorname{dist}_{\mathbb{T}^d}(x,x_k +(t-t_k)v_k) \leq \delta \right\}
\]
and $\mathcal{T}=\{T_k\}_{k\in\mathcal{K}} = \bigcup_{j\in\mathbb{Z}} \mathcal{T}_j$, for the collection of tubes, where 
\[
    \mathcal{T}_j = \{T=T(t_k,x_k) \in \mathcal{T} : t_k \in jR+[0,R))\}.
\]
Fix some $j\in \mathbb{Z}$ and reference the associated tubes with indices $k \in \{1, \dots, N_j\}$. Note that $\sum_{j \in \mathbb{Z}} N_j = \# \mathcal{T} \lesssim \left(\frac{R}{\delta}\right)^d$, in view of the separation condition.

Arguing by duality, we choose $g \in L^{p'}(\mathbb{R} \times \mathbb{T}^d)$, $g\geq 0$ and start writing 
\begin{align*}
    Z_j(g):=\int_{\mathbb{R} \times \mathbb{T}^d} \left( \sum_{k=1}^{N_j} \mathbf{1}_{T_k}(t,x) \right) g(t,x) \dd x \dd t = \sum_{k=1}^{N_j} \int_{T_k} g(t,x) \dd x \dd t  \lesssim \sum_{k=1}^{N_j} \int_{B(x_k,\delta)} Xg(x,v_k) \dd x,
\end{align*}
where we have used the definition of the $X$-ray transform. The $X$-ray transform and the operator $\tilde{X}$ are comparable because we take $|v| \sim 1$. After translating the tubes so that they intersect $[-R,R]$ in $t$, and therefore are all supported in $[-2R,2R]$ in $t$. We can replace $X(x,v_k)$ with their averages of $X(x,v)$ for $v$ in a small ball:\footnote{This is because $\int_{B(x_i,\delta)}g(t,x+tv_k)\dd x \lesssim \int_{B(x_i,C\delta)}g(t,x+tv_k)\dd x$ for $v \in B\left(v_k, c\frac{\delta}{R}\right)$}
\begin{align*}
  Z_j(g) &\lesssim \left( \frac{R}{\delta} \right)^d  \sum_{k=1}^{N_j} \int_{B(x_k,\delta) \times B(v_k, \frac{\delta}{R})} [\widetilde{X}g] (x,v) \dd x \dd v \\
    & = \left( \frac{R}{\delta} \right)^d  \sum_{k=1}^{N_j} \int_{B(x_k,\delta) \times B(v_k, \frac{\delta}{R})} [\Pi_0 \widetilde{X}g] \dd x \dd v + \left( \frac{R}{\delta} \right)^d  \sum_{k=1}^{N_j} \int_{B(x_k,\delta) \times B(v_k, \frac{\delta}{R})} [\Pi_{\neq} \widetilde{X}g] \dd x \dd v \\
    & =: {\rm I}_j + {\rm II}_j.  
\end{align*}
It remains to estimate each of the two terms ${\rm I}_j,{\rm II}_j$ on the right-hand side. 

We first notice that
\[
    [\Pi_0 \widetilde X g ](x,v) = \frac{1}{(2\pi)^d} \int g(t,x+tv) \dd t \dd x \lesssim \| g \|_{L^1(\mathbb{R}^{d+1})} \qquad \text{for all } (x,v) \in \mathbb{T}^d \times \mathbb{R}^d, |v| \sim 1.
\]
Therefore,
\begin{align*}
|{\rm I}_j| & \lesssim \left( \frac{R}{\delta} \right)^d \| \Pi_0 \widetilde X g \|_{L^\infty} \left| \bigcup_{k=1}^{N_j} B(x_k,\delta) \times B\left(v_k, \frac{\delta}{R}\right) \right| \\
& \lesssim \left( \frac{R}{\delta} \right)^d N_j \delta^d \left(\frac{\delta}{R}\right)^d \| \Pi_0 \widetilde X g \|_{L^\infty_{x,v}} \\
&\lesssim N_j \delta^d \| g \|_{L^1_{t,x}} \lesssim N_j\delta^d R^{\frac{1}{p}} \| g \|_{L^{p'}_{t,x}},
\end{align*}
where the last inequality is a consequence of H\"older's inequality.

We turn to the term ${\rm II}_j$ that involves $\Pi_{\neq}$. By Hölder's inequality in $x$ we have:
\[
    |{\rm II}_j |  \lesssim \left( \frac{R}{\delta} \right)^d \delta^{\frac d b} \int_{\bigcup_{k=1}^{N_j}B(v_k,\frac \delta R)}  \| \Pi_{\neq} \widetilde{X} g \|_{L^{b'}_x} \dd v. 
\]
Now, summing in $j$ the above, we obtain the bound: 
\[
    \int_{\mathbb{R} \times \mathbb{T}^d} \left( \sum_{k \in \mathcal{K}} \mathbf{1}_{T_k}(t,x) \right) g(t,x) \dd x \dd t \lesssim \sum_{j\in \mathbb{Z}} N_j\delta^d R^{\frac{1}{p}} \| g \|_{L^{p'}_{t,x}} +  \left( \frac{R}{\delta} \right)^d \delta^{\frac d b} \int_{\bigcup_{k \in \mathcal{K}} B(v_k,\frac \delta R)}  \| \Pi_{\neq} \widetilde{X} g \|_{L^{b'}_x} \dd v. 
\]
Remark that
\[
    \left\vert \bigcup_{k \in \mathcal{K}} B\left(v_k,\frac \delta R\right) \right\vert \leq \# \mathcal{T} \left(\frac{\delta}{R}\right)^d \lesssim 1.
\]
It then follows by Hölder's inequality in $v$, and also noting that $|v| \sim 1$ in the integration region, that: 
\begin{align*}
    \int_{\mathbb{R} \times \mathbb{T}^d} \left( \sum_{k\in \mathcal{K}} \mathbf{1}_{T_k}(t,x) \right) g(t,x) \dd x \dd t &\lesssim  R^{d+\frac{1}{p}} \| g \|_{L^{p'}_{t,x}} +  \left( \frac{R}{\delta} \right)^d \delta^{\frac d b} \| \Pi_{\neq} \langle v \rangle^{-\gamma}\widetilde{X} g \|_{L^{a'}_vL^{b'}_x} \\ 
    & \lesssim \left(  R^{d+\frac{1}{p}}  + R^d \delta^{\frac{d}{b}-d}\right) \| g \|_{L^{p'}_{t,x}},
\end{align*}
where the last step stems from an application of the dual estimate of \Cref{thm.torus.1d,thm.noest-dim2}. The result follows by choosing $b = \frac{dp}{d+1}$, which, for fixed $p$, is always possible for some $a$ by \Cref{thm.torus.1d,thm.noest-dim2}, see also \Cref{figue}. 

\subsection{Optimality of the inequality} Consider first the bush example: a maximal family of tubes $\{T_k\}_{k\in \mathcal{K}}$ which are $\frac \delta R$-separated in direction and intersect at one point. Such a maximal family has $\sim  \left( \frac{R}\delta \right)^d$ elements, and thus $\sum_{k\in \mathcal{K}} \mathbf{1}_{T_k}$ has size $\sim \left( \frac{R}\delta \right)^d$ on a ball of radius $\sim \delta$. Therefore,
\[
    \left\| \sum_{k\in \mathcal{K}} \mathbf{1}_{T_k} \right\|_{L^p_{t,x}} \gtrsim \left( \frac{R}\delta \right)^d \delta^{\frac{d+1}{p}} = R^d \delta^{\frac{d+1}{p}-d}.
\]

Furthermore, assuming that all tubes are contained in $[-CR,CR] \times \mathbb{T}^d$, we consider a maximal subfamily of $\frac{\delta}{R}$ separated tubes.  
On the one hand, by maximality,
\[
    \left\| \sum_{k\in \mathcal{K}} \mathbf{1}_{T_k} \right\|_{L^1} = \sum_k |T_k| \sim R^{d+1}.
\]
On the other hand, by H\"older's inequality, because such a family is included in $[-CR,CR] \times \mathbb{T}^d$ which has measure $\sim R$, we have
\[
    \left\| \sum_{k\in \mathcal{K}} \mathbf{1}_{T_k} \right\|_{L^1} \lesssim R^{1-\frac{1}{p}}  \left\| \sum_k \mathbf{1}_{T_k} \right\|_{L^p}.
\]
Combining these two inequalities gives
\[
    \left\| \sum_{k\in \mathcal{K}} \mathbf{1}_{T_k} \right\|_{L^p} \gtrsim R^{d+\frac 1p}.
\]
Note that in contrast, in the Euclidean case such a maximal family can occupy a volume $\sim R^{d+1}$ compared with $R$ in the torus case.

\appendix

\section{Lorentz spaces}
We gather here a few results on Lorentz spaces which are used in the proofs, referring to \cite{BerghLofstrom,Visan} for complete presentations. We omit the underlying measure space which is unimportant; for us, it will be the Euclidean torus.

The Lorentz spaces $L^{p,q}$, $1 \leq p,q \leq \infty$, are Banach spaces which can be obtained by interpolation \cite[Theorem 5.3.1]{BerghLofstrom} between $L^1$ and $L^\infty$:
\begin{equation}
    \label{eq.interpolation-lorentz1}
    L^{p,q} = (L^1\,,\,L^\infty)_{\theta,q}, \qquad \frac{1}{p} = 1 - \theta. 
\end{equation}
The norm on $L^{p,q}$ is equivalent to the quasi-norm
\[
    \|f \|_{L^{p,q}}^* = \left\| \| f_n(x) \|_{L^p_x(\mathbb{R}^d)} \right\|_{\ell^q_n(\mathbb{Z})}, \qquad \qquad f_n(x) = f(x) \mathbf{1}_{2^n < f(x) < 2^{n+1}}.
\]
It follows from this expression that
\[
    \| f \|_{L^{p,p}} \sim \| f\|_{L^p}, \qquad \| f \|_{L^{p,q_1}} \lesssim \| f \|_{L^{p,q_2}} \qquad \mbox{if $q_1 \geq q_2$}.
\]
The Hölder inequality was generalized to the scale of Lorentz spaces by Hunt \cite{Hunt}, see also \cite{Oneil1963} and \cite[p. 73]{grafakos2011classical}. It states that

\begin{equation}
    \label{holder-oneil}
        \|fg\|_{L^{r,s}} \lesssim \|f\|_{L^{p,s_1}}\|g\|_{L^{q,s_2}},
\end{equation}
whenever
\[
 \frac{1}{r}=\frac{1}{p} + \frac{1}{q}, \qquad  \frac{1}{s} = \frac{1}{s_1} + \frac{1}{s_2}, \qquad 0<s_1,s_2 \leq \infty, \qquad  0 \leq p,q,r \leq \infty.
\]

The Young inequalities were generalized by O'~Neil \cite{Oneil1963}, see also \cite[p. 73]{grafakos2011classical}, under the form
\begin{equation}
    \label{eq.young-oneil}
    \|f*g\|_{L^{r,s}} \lesssim \|f\|_{L^{p,s_1}}\|g\|_{L^{q,s_2}}, 
\end{equation}
whenever
\[
    1+\frac{1}{r}=\frac{1}{p}+\frac{1}{q}, \qquad \frac{1}{s}=\frac{1}{s_1} + \frac{1}{s_2}, \qquad 1 < p, q, r \leq \infty, \qquad 0<s_1,s_2\leq \infty. 
\]

Finally, we will also resort to the following sharp version of the Hausdorff--Young inequality 
\begin{equation}
    \label{eq.hausdorffYoung-real}
    \|\widehat{f}\|_{L^{p,p'}} \lesssim \|f\|_{L^{p'}}, \qquad p \in [2,\infty).  
\end{equation}
Indeed, the Fourier transform $\mathcal{F}$ enjoys the following mapping properties
\[
    \mathcal{F} : L^2 \to L^2, \qquad \mathcal{F} : L^1 \to L^{\infty}.
\]
Interpolation via the real method $(\cdot,\cdot)_{\theta,p'}$ with $\theta = \frac{2}{p}$ and for $p \in (2,\infty)$ yields the boundedness of $\mathcal{F} : L^{p'}=L^{p',p'} \to L^{p,p'}$, which is the claimed inequality.

\section{Interpolation facts}

Let $X$ be a Banach space, and $\dd v$ the Lebesgue measure on $\mathbb{T}^d$, and $w : \mathbb{T}^d \to [0,\infty)$, which is positive almost everywhere. We recall that an equivalent norm on $L^{a,r}(w^a\dd v, X)$ is given by
\[
    \|f\|_{L^{a,r}(w^a\dd v, X)} = \left\|t \nu\left(\{v : \|f(v)\|_{X}>t\}\right)^{\frac{1}{a}} \right\|_{L^{r}(\mathbb{R}_+,\frac{\dd t}{t})}, \qquad \dd \nu = w(v)^a \dd v.
\]
This norm is in general not equivalent to the norm 
\[
    \|f\|_{\mathcal{L}_w^{a,r}(\dd v;X)} = \|wf\|_{L^{a,r}(\dd v;X)}.
\]
In the following, we denote by $\mathcal{L}_w^{a,r}(\dd v;X)$ the space obtained by completion under this norm and after identifying functions by equality almost everywhere.

Observe that when $r=a$ there holds
\[
    \mathcal{L}^{a,a}_{w}(\dd v;X)=L^a(w^a\dd v;X).
\]
Let us recall interpolation notations. 
Let $(A_0,A_1)$ be a compatible couple of Banach spaces, i.e., two
Banach spaces continuously embedded in a common space.

\medskip

\noindent \underline{The real method.} 
For $a\in A_0+A_1$ and $t>0$, the $K$-functional is defined by
\[
    K(t,a;A_0,A_1) := \inf\left\{ \|a_0\|_{A_0}+t\|a_1\|_{A_1}, a=a_0+a_1 \in A_0 + A_1 \right\},
\]
For $0<\theta<1$ and $1\leq q\leq\infty$, the real interpolation space
$(A_0,A_1)_{\theta,q}$ consists of all $a\in A_0+A_1$ such that
\[
    \|a\|_{(A_0,A_1)_{\theta,q}} := \left\| t^{-\theta}K(t,a;A_0,A_1)\right\|_{L^q((0,\infty),\frac{\dd t}{t})}
\]
is finite, with the usual modification when $q=\infty$.
\medskip 

\noindent \underline{The complex method.}
Consider the strip $S=\{z\in\mathbb{C} : 0\leq\operatorname{Re}z\leq1\}$.
The Calder\'on space $\mathcal{F}(A_0,A_1)$ consists of all functions
$f : \mathbb{C} \to A_0+A_1$ which are analytic on $S$, continuous on $\overline{S}$, and such that the following subsets are bounded: 
\[
    \{f(z), z \in S\} \subset A_0 + A_1, \qquad \{f(j+it) : t \in \mathbb{R}\} \subset A_j, \quad j\in\{0,1\}.
\]
It is endowed with the norm
\[
    \|f\|_{\mathcal{F}(A_0,A_1)}  := \max_{j \in \{0,1\}}\sup_{t\in\mathbb{R}}\|f(j+it)\|_{A_j}.
\]
For $0<\theta<1$, the complex interpolation space $[A_0,A_1]_\theta$ consists of all elements $a\in A_0+A_1$ for which there exists
$f\in\mathcal{F}(A_0,A_1)$ satisfying $f(\theta)=a$, with norm
\[
    \|a\|_{[A_0,A_1]_\theta} := \inf\{\|f\|_{\mathcal{F}(A_0,A_1)}, f\in\mathcal{F}(A_0,A_1), f(\theta)=a\}.
\]

\medskip

\noindent
\underline{The interpolation theorems.} Consider a linear operator $T : A_0 + A_1 \to B_0 + B_1$, where $(A_0,A_1)$ and $(B_0,B_1)$ are compatible couples. Assume also that $T : A_j \to B_j$ is continuous for all $j \in \{0,1\}$, then it follows \cite[Theorem 3.1.2, Theorem 4.1.2]{BerghLofstrom} that $T : (A_0,A_1)_{\theta,r} \to (B_0,B_1)_{\theta,r}$ is continuous for all $\theta \in (0,1)$ and $r \in [1, \infty]$, and also continuous as a map $[A_0,A_1]_{\theta} \to [B_0,B_1]_{\theta}$.

Our goal is to prove the following interpolation facts that are used in this article. 
These are not new results, but they are not easily found in the literature.

\begin{proposition}\label{prop.interpolation}
The following interpolation embeddings hold: 
\begin{enumerate}[label=(\roman*)]
    \item Let $a_0, a_1 \in [1,\infty)$ such that $a_0\neq a_1$, and $1 \leq b_0, b_1\leq \infty$. Let $w_0, w_1 : \mathbb{T} \to [0,\infty)$ be two measurable functions, taking positive values almost everywhere. Define for $\theta \in (0,1)$: 
    \[
        \frac{1}{a_{\theta}} = \frac{1-\theta}{a_0} + \frac{\theta}{a_1}, \qquad \frac{1}{b_{\theta}} = \frac{1-\theta}{b_0} + \frac{\theta}{b_1}, \qquad w_{\theta}=w_0^{\frac{(1-\theta)a_\theta}{a_0}}w_1^{\frac{\theta a_\theta}{a_1}}. 
    \]
    Then, 
    \[
        [L^{a_0}(w_0\dd v  ;L^{b_0}),L^{a_1}(w_1\dd v;L^{b_1})]_{\theta} = L^{a_{\theta}}(w_{\theta}\dd v;L^{b_{\theta}}).
    \]    
    \item Let $a_0, a_1 \in [1,\infty)$ such that $a_0 \neq a_1$. Let $\theta \in (0,1)$, and $1\leq r \leq \infty$. Define $a_{\theta}$ such that $\frac{1}{a_{\theta}} = \frac{1-\theta}{a_0} + \frac{\theta}{a_1}$. Let $(X,\|\cdot\|_X)$ be a Banach space. Let $w : \mathbb{T} \to [0,\infty)$ taking positive values almost everywhere. Then 
    \[
        \left(\mathcal{L}_w^{a_0,1}(\dd v;X),\mathcal{L}_w^{a_1,1}(\dd v;X)\right)_{\theta,r} = \mathcal{L}_w^{a_{\theta},r}(\dd v;X).
    \]
\end{enumerate}
\end{proposition}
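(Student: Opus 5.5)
For (i), I would reduce to the classical theorems on complex interpolation of weighted and vector-valued $L^p$ spaces. The first step removes the weights by an isometry. The map $U_j : f \mapsto w_j^{1/a_j} f$ is an isometry from $L^{a_j}(w_j\dd v;L^{b_j})$ onto the unweighted space $L^{a_j}(\dd v;L^{b_j})$. This isometry depends on $j$, however, so I cannot apply it to both endpoints at once.

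Instead I would follow the Stein--Weiss argument and build the analytic family directly.
\begin{itemize}
\item Upper bound. Given $f \in L^{a_\theta}(w_\theta\dd v;L^{b_\theta})$ of norm one, take the standard Calder\'on family for mixed norms (as in \cite[Theorem 5.1.2]{BerghLofstrom}):
\[
F(z)(v,x) = \|f(v)\|_{L^{b_\theta}}^{\,a_\theta\left(\frac{1-z}{a_0}+\frac{z}{a_1}\right)-b_\theta\left(\frac{1-z}{b_0}+\frac{z}{b_1}\right)} |f(v,x)|^{b_\theta\left(\frac{1-z}{b_0}+\frac{z}{b_1}\right)} \operatorname{sgn} f(v,x),
\]
multiplied by the weight factor
\[
w_0(v)^{-\frac{(1-z)}{a_0}}\, w_1(v)^{-\frac{z}{a_1}}\, w_\theta(v)^{a_\theta\left(\frac{1-z}{a_0}+\frac{z}{a_1}\right)}.
\]
On $\operatorname{Re} z = j$, the modulus of $F(z)$ in $L^{a_j}(w_j;L^{b_j})$ reduces to $\int \|f(v)\|_{L^{b_\theta}}^{a_\theta} w_\theta(v)\dd v = 1$. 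This requires checking the exponent of $w_\theta$, which works precisely because $w_\theta=w_0^{(1-\theta)a_\theta/a_0}w_1^{\theta a_\theta/a_1}$. At $z=\theta$ the weight factor collapses to $1$, so $F(\theta)=f$.
\item Lower bound. This follows by duality, using the same construction for the dual spaces $L^{a_j'}(w_j^{1-a_j'};L^{b_j'})$ together with the duality theorem for complex interpolation.
\end{itemize}
To justify boundedness and continuity of $F$, I would first treat simple functions and truncated weights, then pass to the limit by density. The alternative is to cite the weighted version of \cite[Theorem 5.5.3]{BerghLofstrom} combined with \cite[Theorem 5.1.2]{BerghLofstrom}.

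For (ii), I would again transfer the problem to an unweighted setting by an isometry, but this time the weight is common to both endpoints. The map $M: f\mapsto wf$ is, by definition, an isometry of $\mathcal{L}_w^{a,r}(\dd v;X)$ onto $L^{a,r}(\dd v;X)$ for every $(a,r)$. Real interpolation functors commute with a simultaneous isomorphism of couples, so
\[
\left(\mathcal{L}^{a_0,1}_w,\mathcal{L}^{a_1,1}_w\right)_{\theta,r} = M^{-1}\left(L^{a_0,1}(X),L^{a_1,1}(X)\right)_{\theta,r}.
\]
It then suffices to show $\left(L^{a_0,1}(X),L^{a_1,1}(X)\right)_{\theta,r}=L^{a_\theta,r}(X)$. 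This is the vector-valued Lorentz reiteration theorem. My route would be:
\begin{itemize}
\item Write $L^{a_j,1}(X)=(L^1(X),L^\infty(X))_{\eta_j,1}$ with $\eta_j = 1-1/a_j$. This uses the vector-valued version of \eqref{eq.interpolation-lorentz1}, where the $K$-functional of $(L^1(X),L^\infty(X))$ is computed from the decreasing rearrangement of $\|f(\cdot)\|_X$, exactly as in the scalar case.
\item Apply the reiteration theorem \cite[Theorem 3.5.3]{BerghLofstrom}. This needs $\eta_0\ne\eta_1$, i.e.\ $a_0\neq a_1$, and gives $(L^1(X),L^\infty(X))_{\eta,r}$ with $\eta=(1-\theta)\eta_0+\theta\eta_1 = 1-1/a_\theta$. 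That space is $L^{a_\theta,r}(X)$.
\end{itemize}
Transporting back by $M^{-1}$ yields $\mathcal{L}^{a_\theta,r}_w(\dd v;X)$. A side remark is that the paper writes $\mathbb{T}$ as the base space, but nothing depends on this: any $\sigma$-finite measure space, including $\mathbb{R}^d$ with $\dd v$, works.

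The main obstacle is the technical justification in (i). One must make sure the analytic family is admissible when $w_0,w_1$ vanish or blow up, or when $b_j=\infty$, where the usual formula must be modified. I would handle this by restricting to $v$ in sets where $\epsilon<w_0,w_1<1/\epsilon$ and $f$ is simple, proving the estimates there uniformly in $\epsilon$, and concluding by monotone convergence. For (ii), the only delicate point is the vector-valued $K$-functional identity, which follows by applying the scalar formula to $\|f(v)\|_X$ and taking the decomposition $f = f\mathbf{1}_{\|f\|_X>\lambda} + f\mathbf{1}_{\|f\|_X\le\lambda}$.
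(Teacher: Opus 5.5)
Your strategy is the paper's. For (i) the paper also starts from \cite[Theorem 5.1.2]{BerghLofstrom} and removes the weights with the Stein--Weiss factor $w_0^{(1-z)/a_0}w_1^{z/a_1}$ acting on Calder\'on families. For (ii) it also uses the isometry $f\mapsto wf$ and the fact that the relevant $K$-functional only depends on $v\mapsto\|f(v)\|_X$.

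However, the explicit family you write down in (i) does not work. With your weight factor
\[
W(z)=w_0^{-\frac{1-z}{a_0}}\,w_1^{-\frac{z}{a_1}}\,w_\theta^{a_\theta\left(\frac{1-z}{a_0}+\frac{z}{a_1}\right)}
\]
and the identity $w_\theta^{1/a_\theta}=w_0^{(1-\theta)/a_0}w_1^{\theta/a_1}$, you get $W(\theta)=w_0^{-\frac{1-\theta}{a_0}}w_1^{-\frac{\theta}{a_1}}w_\theta=w_\theta^{1-1/a_\theta}$. Since $a_\theta>1$, this is not $1$ unless $w_\theta\equiv 1$, so $F(\theta)\neq f$. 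On $\operatorname{Re}z=j$ you get $\|F(j+is)\|^{a_j}_{L^{a_j}(w_j\dd v;L^{b_j})}=\int\|f(v)\|_{L^{b_\theta}}^{a_\theta}w_\theta(v)^{a_\theta}\dd v$, not $\int\|f(v)\|_{L^{b_\theta}}^{a_\theta}w_\theta(v)\dd v$. You have treated $w_0,w_1$ as densities of the measure but $w_\theta$ as a multiplier.

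The exponent of $w_\theta$ must be $\frac{1-z}{a_0}+\frac{z}{a_1}$. Equivalently, apply the unweighted mixed-norm family to $g=w_\theta^{1/a_\theta}f$ and then multiply by $w_0^{-(1-z)/a_0}w_1^{-z/a_1}$. Then $W(\theta)=1$, both boundary norms equal $\int\|f\|_{L^{b_\theta}}^{a_\theta}w_\theta\dd v=1$, and your upper bound goes through.

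Once repaired, (i) is the paper's argument written out by hand. The paper's formulation buys a little more. Multiplication by $w_0^{(1-z)/a_0}w_1^{z/a_1}$ identifies the weighted and unweighted Calder\'on spaces isometrically. At $z=\theta$ it is exactly the isometry $f\mapsto w_\theta^{1/a_\theta}f$ of $L^{a_\theta}(w_\theta\dd v;L^{b_\theta})$ onto $L^{a_\theta}(\dd v;L^{b_\theta})$. So both inclusions are transported at once from the unweighted theorem, and your separate duality step is not needed.

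Your (ii) is correct and differs from the paper only in where the $K$-functional reduction happens.
\begin{itemize}
\item The paper proves $K(t,f;L^{a_0,1}(X),L^{a_1,1}(X))=K(t,\|f\|_X;L^{a_0,1},L^{a_1,1})$ via the splitting $G_i=FF_i/(F_0+F_1)$, then reiterates in the scalar setting.
\item You identify $K(t,f;L^1(X),L^\infty(X))$ with $\int_0^t(\|f\|_X)^*$ and reiterate in the vector-valued setting.
\end{itemize}
For that identity, your splitting at level $\lambda$ gives only the upper bound, up to a factor $2$. The lower bound needs $\|f\|_X\le\|f_0\|_X+\|f_1\|_X$ together with monotonicity of $\int_0^t(\cdot)^*$, which is the paper's lattice argument in another form. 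Also, $a_j=1$ gives $\eta_j=0$. This is permitted in \cite[Theorem 3.5.3]{BerghLofstrom} because $L^1(X)$ is of class $\mathcal{C}(0)$ for the couple $(L^1(X),L^\infty(X))$.
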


\begin{proof}
(\textit{i}) Our starting point is the following interpolation result \cite[Theorem 5.1.2]{BerghLofstrom}: 
    \[
        [L^{a_0}(\dd v,A_0),L^{a_1}(\dd v,A_1)]_{\theta} = L^{a_{\theta}}(\dd v,[A_0,A_1]_{\theta}),
    \]
    for Banach spaces $A_0$ and $A_1$, for which we take $A_i=L^{b_i}$ in our case. Then we conclude as in \cite[Theorem 5.5.3]{BerghLofstrom} by observing that the map
    \[
        S : \mathcal{F}(L^{a_0}(\dd v ;A_0),L^{a_1}(\dd v ; A_1)) \longrightarrow \mathcal{F}(L^{a_0}(\dd v ;A_0),L^{a_1}(\dd v ; A_1))    
    \]
    defined by $Sf(z)=w_0^{\frac{1-z}{a_0}}w_1^{\frac{z}{a_1}} f(z)$, where $\mathcal{F}(X,Y)$ denotes the Calder\'on class defined in the complex interpolation method is an isometric isomorphism.
    
\medskip 
(\textit{ii}) Since the multiplication operator by $w$ induces an isometric isomorphism $\mathcal{L}_w^{a_i,1}(\dd v;X) \to L^{a_i,1}(\dd v;X)$, it follows that 
\[
    K\left(t,f;\mathcal{L}_w^{a_0,1}(\dd v;X),\mathcal{L}_w^{a_1,1}(\dd v;X)\right) = K\left(t,wf; L^{a_0,1}(\dd v,X), L^{a_1,1}(\dd v;X)\right),
\]
so that our task boils down to showing the interpolation property 
\begin{equation}
    \label{eq.interpolation.fact.last}
    \left(L^{a_0,1}(\dd v;X), L^{a_1,1}(\dd v;X)\right)_{\theta,r}=L^{a_{\theta},r}(\dd v;X). 
\end{equation}
For any function $f=f(v,x)$, write $F(v)=\|f(v)\|_X$. We claim that there holds:
\begin{equation}
    \label{claim.interpo}
    K(t,f;L^{a_0,1}(\dd v;X),L^{a_1,1}(\dd v;X)) = K(t,F;L^{a_0,1},L^{a_1,1}),
\end{equation}
where the spaces $L^{a_0,1}$ are the Lorentz spaces on $\mathbb{T}^d$ endowed with the Lebesgue measure. Once this claim is proved, then \eqref{eq.interpolation.fact.last} follows from \eqref{claim.interpo} and $(L^{a_0,1},L^{a_1,1})_{\theta,r}= L^{a,r}$, whose proof follows from the reiteration theorem and \eqref{eq.interpolation-lorentz1}. 

In order to prove \eqref{claim.interpo}, start with $f=f_0+f_1$, and write the associated $F$ as $F=G_1 + G_2$ with $G_i(v)=\frac{F(v)F_i(v)}{F_0(v)+F_1(v)}$ where $F_i(v)=\|f_i(v)\|_X$. The triangle inequality implies $F(v) \leq F_0(v)+F_1(v)$ and therefore $0\leq G_i(v) \leq F_i(v)$. Now, remark that $\{v : G_i(v)>t\} \subset \{v : F_i(v)>t\}$ so that 
\[
    \|G_i\|_{L^{a_i,1}} \leq \|F_i\|_{L^{a_i,1}} = \|f_i\|_{L^{a_i,1}(\dd v;X)},
\] 
which is enough to imply
\[
    K(t,F,L^{a_0,1},L^{a_1,1}) \leq K(t,f,L^{a_0,1}(\dd v;X),L^{a_1,1}(\dd v;X)).
\]
It remains to prove the reverse inequality. To this end, consider a decomposition $F=F_0+F_1$ and define $f_i(v)=\frac{F_i(v)}{F(v)}f(v)$ where we recall that $F(v)=\|f(v)\|_{X}$. Then $f=f_0+f_1$ and $\|f_i\|_{L^{a_i,1}(\dd v;X)}=\|F_i\|_{L^{a_i,1}(\dd v)}$ which is enough to conclude.
\end{proof}

\end{document}